\documentclass[12pt,reqno]{amsart}
\usepackage[T1]{fontenc}
\usepackage[utf8]{inputenc}
\usepackage[english]{babel}
\usepackage{extsizes}
\usepackage{amssymb,amsfonts,amsthm,amsmath}
\usepackage{graphicx}
\usepackage{color}
\usepackage{subcaption}
%\usepackage[notref,notcite]{showkeys}
%\usepackage{mathtools}
%\mathtoolsset{showonlyrefs}
\usepackage[pagebackref=true]{hyperref}
\usepackage{hyperref}
\usepackage[shortlabels]{enumitem}
    \hypersetup{
          unicode   = true,
          colorlinks= true,
    %     linkcolor=blue,
    %     filecolor=magenta,      
    %     urlcolor=cyan,
    %     breaklinks=true
    }
\usepackage{cite}
\usepackage{setspace}\onehalfspacing
\AtBeginDocument{
\addtolength\abovedisplayskip{-0.3\baselineskip}
 \addtolength\belowdisplayskip{-0.3\baselineskip}
 \addtolength\abovedisplayshortskip{-0.3\baselineskip}
 \addtolength\belowdisplayshortskip{-0.3\baselineskip}
}
\usepackage[left=1 in,top=1 in,right=1 in, bottom=1 in]{geometry}
%\usepackage[hmargin=1 in,vmargin=1 in]{geometry}
%\usepackage[T1]{fontenc} % Use 8-bit encoding that has 256 glyphs
%\usepackage{fourier} 
%---------------------------------
\numberwithin{equation}{section}
%\definecolor{refkey}{gray}{1}%
%\usepackage{showkeys}
%\usepackage{graphics}
\linespread{1.3}

\definecolor{darkred}{rgb}{.70,.12,.20}

\definecolor{darkgreen}{rgb}{.20,.52,.14}

\definecolor{byz}{rgb}{.44,.16,.39}

%%%%%%%%%%%%%%%%%%%%%%%%%%%%%%%%%%%%
\numberwithin{equation}{section}
%%%%%%%%%%%%%%%%%%%%%%%%%%%%%%%%%%%%%%%%%%%%%%%%%%%%%%%%%%%%%%%%%%%%
\setlength{\marginparwidth}{2cm}
\setcounter{equation}{0}
\setlength\parindent{0pt} %noindent entire

%--------------------------------------------------
\newtheorem{lemma}{Lemma}
\newtheorem{remark}{Remark}
\newtheorem{definition}{Definition}

\newtheorem{assumption}{Assumption}
\newtheorem{theorem}{Theorem}

\newtheorem{proposition}{Proposition}

\newtheorem{corollary }{Corollary}
\newtheorem{Lad-Ur}{Ladyzhenskaya-Uraltceva iterative Lemma}

\newtheorem{Main Findings}[theorem]{Main Findings} 
%-------------------------------------------------
\usepackage{todonotes}

\title{Peaceman Well Block Problem For Time-Dependent  Flows of Compressible Fluid}
\author{
 A. Ibraguimov$^1$, E. Zakirov.$^2$, I. Indrupskiy$^2$, D. Anikeev$^2$, A. Zhaglova$^2$.
}

\date{}
\setlength{\parskip}{0pt}
%--------------------------------------------------------------------------------------------------------------------------------------------------------------

\begin{document}
\maketitle
\vspace{-1cm}
{\begin{center}
\small{
$^{1}$ Texas Tech University(USA) and Oil and Gas Research Institute of RAS,Moscow
\\
 e-mail: \texttt{akif.ibraguimov@ttu.edu}
\\
$^2$ Oil and Gas Research Institute of RAS, Moscow
\\
e-mail: \texttt{ezakirov@ogri.ru}, \texttt{i-ind@ipng.ru} \texttt{anikeev@ogri.ru}, \texttt{azhaglova90@gmail.com}
}
\end{center}}
\begin{abstract}
\noindent
 We consider sewing machinery between finite difference and analytical solutions defined at different scales: far away and near the source of the perturbation of the flow.
One of the essences of the approach is that coarse problem and boundary value problem in the proxy of the source model two different flows. In his remarkable paper Peaceman propose a framework how to deal with solutions defined on different scale for linear  \textbf{time independent} problem by introducing famous, Peaceman well block radius. In this article we consider novel problem how to solve this issue for transient flow generated by compressiblity of the fluid. 
We are proposing method to glue solution via total fluxes, which is predefined on coarse grid and changes in the pressure, due to compressibility, in the block containing production(injection) well.  It is important to mention that the coarse solution "does not see" boundary.

From industrial point of view our report provide mathematical tool for analytical interpretation of simulated data for compressible fluid flow around a well in a porous medium. It can be considered as a mathematical "shirt" on famous Peaceman well-block radius formula for linear (Darcy) transient flow but can be applied in much more general scenario.

In the article we use Einstein approach to derive Material Balance equation, a key instrument to define $R_0$.    

We will enlarge Einstein approach for three regimes of the Darcy and non-Darcy flows for compressible fluid(time dependent):

$\textbf{I}. Stationary ;  \textbf{II}. Pseudo \ Stationary(PSS) ;  \textbf{III}.  Boundary \ Dominated(BD).$

Note that in all known authors literature, rate of the production on the well is time independent. 
%We developed a method, which allow to determine Peaceman well block radius $R_0^{PSS}$ for $PSS$ regime and $R_0^{BD}$ for $BD$ are time independent but depend on external reservoir radius-$r_e$ in-spite fact that $PSS$ and $BD$ regimes are transient. In the paper it was  proven that those  $R_0^{PSS}$ and $R_0^{BD}$ for both: $PSS$ and $BD$ regimes   converges to classical Peaceman radius $R_0$  as $r_e$ converges to infinity. Note that Peaceman $R_0$ depends only on size of the grid $\Delta.$
\textbf{Our MB equation tuned to prove that  corresponding Peaceman well block radius for each of the regime of flow is time independent, and converge to Peaceman well block radius when exterior reservoir radius is vanishing.}        
For clarity we will first derive  Peaceman Well Block formula for each  regimes  of  the flows in 1-D  Euclidean case and then more difficult and more practical 2-D radial cases.

%\akif{want explain Ilia some technique of overleaf}
\end{abstract}
\tableofcontents
\vspace{-1cm}
\section{Introduction}
Peaceman well-block radius\cite{zia7} is routinely  used by engineers, but was not rigorously studied even for steady-state flows. Detailed review on fundamentals Peaceman well bock radius for linear and non-linear stationary flows in porous media was recently accepted for publication in  Applied and Computational Mathematics, an international journal and was published in the archive \cite{izia}.   Here we just want we often refer on  to mention that concept of equivalent well block radius was introduced in Russia(see \cite{zia1},and \cite{zia2}), but was not translated, and therefore was not cited in modern literature, as it often happen. 
In the basis of the idea behind Peaceman well block radius lay material balance equation which enable to sew analytical solution with simulated  one, and interpreted result of computed value of the pressure   in the block containing well.
In this section we will describe the paradigm for material balance(MB)  as an algebraic set of equations, and indicate our intended application.
 To introduce MB system of equation let first consider the finite set of dependant variables
 \begin{equation}\label{p-q-def}
     \mathcal{P}=\left\{p_{\pm r_0, 0}(s);p_{0,\pm r_0}(s);p_{\pm 1, 0}(s);p_{0,\pm 1}(s);q_x^{\pm}(s);q_y^{\pm}(s)\right\}.
 \end{equation}
 Let 
 \begin{equation}\label{K-Q-def}
     \mathcal{K}=\left\{K_x^{\pm};K_y^{\pm}\right\} \ \textnormal{and} \ \mathcal{Q}=\left\{Q_x^{\pm};Q_y^{\pm}\right\}.
 \end{equation}
 be inputs, which in this study are considered to be  constants. To make discussion more motivated we will highlight  intended application.  
 Consider diffusive process in the domain $\Omega \ni 0$ with source/sink - $0$, which is igniting process and  $\Omega_N=\sum_{i=1}^N B_i$ grid approximating $\Omega$. Let  $\Omega_N\supset B_0\ni 0$ to be  characterised  by blocks $B_i$,  of the "size" $\Delta$, and contains $0$, for all $i$.
Major assumption is that process of the transport and changes of the fluid is much "faster" than geological process, and therefore dependents of the $\mathbf{K}$ and $\mathcal{Q}$ to be ignored.
Assume that conductivity at  blocks of the interest  are fixed flow generated by source($well$ which fixed  and located in the  box $B(\Delta)$ of size   $\Delta,$ and this property holds  for each $\Delta$.
Let set $ \mathcal{P}$ contains only parameters defined only in center $B_0=B_{0,0}$( domain of the parameters $p_{\pm r_0, 0}(s);p_{0,\pm r_0}(s) , \cdots$ are in $B_0$) and nearest  surrounding four blocks  $B_{i,J}$( domain of the parameters $p_{\pm r_0, 0}(s);p_{0,\pm r_0}(s)$ are in $B_{\pm 1,0} \ B_{0, \pm 1} . $).
 Consider the filtration, which controlled by  Material Balance (MB) equation as an  algebraic equation  w.r.t. unknown variable $p_{a,b}(s)$, depending on parameter $s$ and  input variable $q_a^b(s)$ also depending on parameter $s$. $s$ is the model for time. System also featured by input parameter $\tau,$ which associate to changes in properties of variables $p$ on time interval $\left[s,s+\tau\right].$ This $\tau$ in some sens connect our equation to Einstein equation of material balance(see \cite{Einstein56}, \cite{ibr-isank-sob}), is  predefined and  set to be very small. 
 \begin{remark}
   Note that  Einstein equation of material balance is naturally stochastic, whether ours is deterministic. In spite of that we think that  Einstein's method can be extended  stochastic processes defined on THE stochastic grid. We will leav this for further research.   
 \end{remark}   
 Dependent variables fro set $\mathcal{P}$ and  $\mathcal{P}$ w.r.t. parameters $\mathcal{K}$,$\mathcal{Q}$, and $\tau$ are subject to algebraic equation:
\begin{align}\label{Mat-Bal-Alg}
&       \tau\cdot K_x^- \cdot \left(p_{-r_0,0}(s) - p_{-1,0}(s)\right)=\tau\cdot q_x^-(s)+Q_x^-\left(p_{-r_0,0}(s+\tau)-p_{-r_0,0}(s)\right)\\
&       \tau\cdot K_x^+ \cdot \left(p_{r_0,0}(s) - p_{1,0}(s)\right)=\tau \cdot q_x^+(s)+Q_x^+\left(p_{-r_0,0}(s+\tau)-p_{-r_0,0}(s)\right)\\
&       \tau \cdot K_y^- \cdot \left(p_{0,-r_0}(s) - p_{0,-1}(s)\right)=\tau \cdot q_y^-(s)+Q_y^-\left(p_{-r_0,0}(s+\tau)-p_{-r_0,0}(s)\right)\\
&       \tau \cdot K_y^+ \cdot \left(p_{0,r_0}(s) - p_{0,1}(s)\right)=\tau \cdot q_y^+(s) +Q_y^+\left(p_{0,r_0}(s+\tau)-p_{0,r_0}(s)\right)
     \end{align}

Denote:
\begin{equation}\label{qx+qy}
   q_x(s)=q_x^-(s)+q_x^+(s) \ \  q_y(s)=q_y^-(s)+q_y^+(s) ,Q_x=Q_x^-+Q_x^+ \ \  Q_y=Q_y^-+Q_y^+
\end{equation}
and
\begin{equation}\label{qx+qy}
   q(s)=q_x(s)+q_y(s) \ ; \ Q=Q_x+Q_y.
\end{equation}
Assume symmetry condition w.r.t.  $+$ and $-,$ which we state as follows: 
\begin{definition}\label{direc-sym} Symmetry structural constrains w.r.t. $+, \ -$.
\begin{enumerate}
    \item $K$ coefficient
    \begin{equation}\label{qx+-Kx+-sym}
 K_x^-=K_x^+ = K_x \ ; \  K_y^-=K_y^+=K_y     .
\end{equation}
\item $q$ parameter
\begin{equation}\label{q-sym}
q_x^-(s)=q_x^+(s)=\frac{q_x(s)}{2} \ ; \ q_y^-(s)=q_y^+(s)=\frac{q_y(s)}{2}   .
\end{equation}
\item $Q$ coefficient
\begin{equation}\label{Q-sym}
  Q_x^-=Q_x^+=\frac{Q_x}{2} \ ; \ Q_y^-=Q_y^+=\frac{Q_y}{2} .   \end{equation}
 \item $p$ variable w.r.t first index 
 \begin{equation}\label{p-x-dir}
    p_{-r_0,0}(s)= p_{r_0,0}(s) =p_{r_0}^x(s)\ ; \    p_{-1,0}(s)= p_{1,0}(s)=p_{1}^x(s) .  
 \end{equation}
  \item $p$ variable w.r.t second index
 \begin{equation}\label{p-y-dir}
 p_{0,-r_0}(s)= p_{0,r_0}(s)=p_{r_0}^y(s) \ ; \    p_{0,-1}(s)= p_{0,1}(s)=p_{1}^y(s) .   
 \end{equation}
 
\end{enumerate}

\end{definition}

Then from \eqref{Mat-Bal-Alg}, and some basic algebraic manipulations follows 
\begin{align}
&       \tau\cdot 2\cdot K_x \cdot \left(p_{r_0}^x(s) - p_{1}^x(s)\right)=\tau\cdot q_x(s)+Q_x\cdot 2\cdot\left(p_{r_0}^x(s+\tau) - p_{r_0}^x(s)\right),\label{MB-A-Sym-x}\\
&       \tau\cdot 2\cdot K_y \cdot \left(p_{r_0}^y(s) - p_{1}^y(s)\right)=\tau\cdot q_y(s)+Q_y\cdot 2 \cdot \left(p_{r_0}^y(s+\tau) - p_{r_0}^y(s)\right)\label{MB-A-Sym-y}.
     \end{align}

If one will assume that   $(p_{r_0}^y(s) - p_{1}^y(s)=0,$ $\left(p_{r_0}^y(s+\tau) - p_{r_0}^y(s)\right), \text{and} ,  q_y(s)=0$ then we will get a precursor  for 1-D MB which in the \underline{case of  symmetry} in $x$ - direction will take a form
\begin{equation} \label{MB-A-Sym-1-D}
    \boxed{\tau\cdot 2\cdot K_x \cdot \left(p_{r_0}^x(s) - p_{1}^x(s)\right)=\tau\cdot q_x(s)+Q_x \cdot 2\cdot \left(p_{r_0}^x(s+\tau) - p_{r_0}^x(s)\right).}
\end{equation}
 
As a precursor for  2-D MB which in \underline{case of  symmetry and isotropy} we will assume that    $p_{r_0}=p_{r_0}^x=p_{r_0}^y, \ \cdots$ and anisotropy: $K_x=K_y$ letting   $q(s)=q_x(s)+q_y(s)$ and $Q(s)=Q_x(s)+Q_y(s)$ we will take a form   

\begin{align}\label{Mat-Bal-Alg-Sym-in all- isotr}
\boxed{\tau\cdot 4\cdot K \cdot \left(p_{r_0}(s) - p_{1}(s)\right)=\tau\cdot q(s)+Q(s)\cdot 4 \left(p_{r_0}(s+\tau) - p_{r_0}(s)\right).}
\end{align}

%\subsection{1-D-case}
 %    Assume 1-D approximation of the MB
%\begin{definition}
 %   Let $K_y=0$ and $q_y=Q_q=0$
%\end{definition}
     %We will isotropic MB, which in 1-D case is equivalent to symmetry condition. 
%\begin{definition}
%\begin{equation}\label{K-x=K-y}
 %  K_x=K_y \, \ q_x(s)=q_y(s), \ p_1^x=p_1^y=p_1(s) , \ %\text{and} \ p_{r_0}^x=p_{r_0}^y=p_0(s) 
%\end{equation}
%\end{definition}
%Denote total  flux and parameter which characterise growth of $p_0(s)$ w.r.t $\tau$   as 
%\begin{definition}
%\begin{equation}
 %   q=q_x ,\ Q=Q_x
%\end{equation}
%\end{definition}

In general setting algebraic variable(letter) of interest $p_i^{x,y,\cdots}$, $i=0,1,2,\cdots$ may depend on parameter, and it is common in algebraic geometry structure. 
Assume that $i=0,1$ then  using above arguments, we are considering       Algebraic Parametric Structure as a sewing machinery between numerical  and "analytical" solutions :
\begin{equation}\label{1-D MB}
    \tau\cdot\left(J_{1,0}^p\cdot\left(p_0(s)-p_1(s) \right) -I_q\cdot q(s)\right)=L_q^{p_0}\cdot\left(p_0(s+\tau)-p_0(s)\right) .\\
    \end{equation}
    Value of  the coefficients  and their dependence on input parameters can vary  depending  on the intended applications, dimension, geometry and dynamics of the process, discretization $\text{etc}.$ 
    \begin{remark}
        In a view of the algebraic structure equation of intended application $p_i(s)$ are dependant variables, \eqref{1-D MB} $q(s)$ is main \underline{  function defining process} , and three  others coefficients  $J_{1,0}^p,$ $I_q,$ and $L_q^{p_0}$ contains essential characteristics of the algebraic and geometrical structure of the media of the flow  and its discretization by domain $\Omega_N$.We will choose   this coefficients in next paragraph.    
    \end{remark}
    
\section{Motivation for  Material Balance Equations and Application to Numerical Scheme}
Consider flow in the reservoir $\Omega$, and corresponding mathematical model. 
Numerical simulator of the flow provide three basic information 
\begin{enumerate}
\item Geometric approximation of the domain of multi-component and multi-phase flow 
\item Numerical Value of the functions of interest in each block

\item Value of the parameters characterising domain w.r.t. chemical and physical of the fluids and media
\end{enumerate}
    To motivate the algebraic structure of MB \eqref{1-D MB} equation, consider orthogonal grid of dimension $M\times N$ and size $\Delta_x$ and $\Delta_y.$ Let $P_{(M,N)}$ be $M\times N$ matrix of the pressure value, with an elements $p_{i,j}(t),$ which associate  block $B_{i,j}.$ Assume that block $B_{0,0}$ contains source at center $(0,0)$, which generate differences  in the function $p_{i,j}(t).$ Here $D=\Omega\times (0.h)$ is 3-dimensional cylinder and there  non flow in $z$ direction. Assume   Green type  function $p(x,y,t)$ be a solution of the the basic modeling problem
    \begin{align}
     &L\cdot\frac{\partial p(x,y,t)}{\partial t }-J\cdot\left(\frac{\partial^2}{\partial x^2 }+ \frac{\partial^2}{\partial y^2 } \right)p= I\cdot\delta(x,y) \ \ \text{in} \ \ \left(\Omega \setminus (0,0)\right)\times (-\infty,\infty) \ ,   \\
      &B(p)=0 \ \text{on} \ \partial\Omega \times (-\infty,\infty).  
    \end{align}

        Here  $B$ to be boundary operator, which in our case will be Dirichlet, or Newman operator.  
    To approximate  function $p(x,y,t)$ consider finite different solution of the problem in rectangular domain
          \begin{align}
      &L\cdot\frac{ p_{i,i}(t+\tau)-p_{i,i}(t)}{ \tau }-\\ \nonumber
          &J\cdot\left(\frac{p_{i-1,j}(t)-2p_{i,j}(t)+p_{i+1,j}(t)}{\Delta_x ^2}+
      \frac{p_{i,j-1}(t)-2p_{i,j}(t)+p_{i,j+1}(t)}{\Delta_y ^2}\right)= \\ \nonumber
      &I\cdot\frac{\delta_{i,j}}{\Delta_x\cdot \Delta_y\cdot h} \ \ \text{in} \ \ \Omega_N \setminus (0,0) \ ,   \\
      &B(p)=0 \ \text{on} \ \partial\Omega_N \times (-\infty,\infty) \,\  
    \end{align}  
      or
\begin{align}\label{MB-fdif-dx-dy}
      &L\cdot(\Delta_x\cdot \Delta_y\cdot h)\cdot  \left( p_{i,j}(t+\tau)-p_{i,j}(t)\right)=\\ \nonumber
          &\tau \cdot\left[J h\left(\frac{\Delta_y}{\Delta_x}(p_{i-1,j}(t)-2p_{i,j}(t)+p_{i+1,j}(t))+
      \frac{\Delta_x}{\Delta_y}( p_{i,j-1}(t)-2p_{i,j}(t)+p_{i,j+1}(t))\right)+I\delta_{i,j}\right] ,\\ \nonumber          &B(p)=0 \ \text{on} \ \partial\Omega_N \times (-\infty,\infty).  
    \end{align}  
  Here $\delta_{i,j}$ is Kronecker symbol.
Equation above is basic and can be applied in $1-D$ and $2-D$ cases, although has in both cases many similarities but it differ.  Namely: 
\begin{enumerate}

    \item \underline{  1-D Material Balance in "last blocks" $B_0, B_1$}
    
    Under assumption   \underline{1-D Symmetry } let $\Delta_y=const$, and $h=const$ for all $\Delta=\Delta_x$ then MB  takes form 
    \begin{align}\label{MB-fin-dif}
      &L\cdot\Delta\cdot \Delta_y h \left( p_{0}(t+\tau)-p_{0}(t)\right)=\\ \nonumber
          &\tau\left(2\cdot J\Delta_y\cdot h\cdot\frac{\left(p_{1}(t)-p_{0}(t)\right)}{\Delta}+I\delta_{0,0}\right)=\tau\left(2\cdot (J\cdot(\Delta_y\cdot h))\cdot\frac{\left(p_{1}(t)-p_{0}(t)\right)}{\Delta}+q\delta_{0,0}\right) ,\\ \nonumber         
    \end{align} 
    
    \item \underline{ Radial Material Balance Equation in "last blocks" $B_0, B_{pm 1, 0}, B_{0, pm 1} $ }
  
  Under assumption of $\underline{ 2-D \text{ symmetry}}$ if one will will let that  
  \begin{equation}\label{delta-sym}
  \boxed{\Delta=\Delta_x=\Delta_y}. 
  \end{equation}
  We will also assume that thickness  of the reservoir is constant and 
  \begin{equation}\label{q-ss-pss}
 \boxed{I=q}. 
  \end{equation}
Then equation \eqref{MB-fdif-dx-dy} can be simplified as
   \begin{align}\label{MB-fin-dif}
      &L\cdot\Delta^2\cdot h \left( p_{0}(t+\tau)-p_{0}(t)\right)=\\ \nonumber
          &\tau\left(4\cdot J\cdot h\cdot\left(p_{1}(t)-p_{0}(t)\right)+I\delta_{i,j}\right)=\tau\left(4\cdot (J\cdot h)\cdot\left(p_{1}(t)-p_{0}(t)\right)+q\delta_{i,j}\right) ,\\ \nonumber           
    \end{align} 
\end{enumerate}
For convenience let summarize  comments on properties of the media  w.r.t. the flow  as itemized remarks 
\begin{remark}
\begin{enumerate} Physical consideration
\item In above $q$ is total rate on the well(well production), which is time dependant $\left(q=q(s)\right)$ for PSS and BD regimes. 
\item In this article for 2-D flows we assume isotropy and symmetry flows: 
\begin{equation}\label{sym-aniz}
   \boxed{ K=K_x^-=K_x^+=K_y^-=K_y^+ \ ; \   q_x^-=q_x^+=q_y^-=q_y^+ \ ; \ Q_x^-=Q_x^+=Q_y^-=Q_y^+.}
\end{equation}
\item In this article for 1-D flows we assume isotropy and symmetry flows:
\begin{equation}\label{sym-aniz-1-D}
   \boxed{ K=K_x^-=K_x^+ \ ; \   q=q_x^-=q_x^+\  q_y^-=q_y^+=0 \ ; Q= Q_x^-=Q_x^+ \; \  Q_y^-=Q_y^+=0.}
\end{equation}
\item All above assumptions was made enable   use  analytical solution, which can be constructed explicitly.  In case when analytical solution is unavailable  in the gluing machinery one can use the numerical  solution on fine scale for the corresponding IBVP.  
\item MB equation "does not see boundary $\Omega_N$ and used to glue analytical solution by solving Peaceman problem. But analytical solution will take into account impact of boundary condition on value of Peaceman Radius. We will see that in linear case $R_0$ will depend on size of the domain only in time dependent problem. Once more $R_0$ as it was shown by Peaceman will be independent on size of the domain, and this is quite remarkable finding by Peaceman. This issue was in the detail discussed in our article \cite{izia}        
\end{enumerate}

 \end{remark}

\begin{subsection}{1-D Material Balance } 
%\begin{equation}\label{basic-MB-PSS-Linear case}
 %   \tau\cdot\left(-2K\cdot(p_0(s)-p_1(s)) + q\right) = Q\cdot \left(p_0(s+\tau)-p_0(s)\right), 
  %  \end{equation}
  
    Now consider Grid given in Fig. \ref{Einstein-MB-1D} with no flow condition in $y$ direction. Assume $h\cdot\Delta_y=1$ and let $\Delta_x=\Delta$. Then $1-D$ approximation of the Grid as in Figure 2'. Corresponding MB on 1-D  grid in $y$ dirrection is considered to be "trivial" 
  % \begin{align}\label{Mat-Bal-Alg-Sym}
  %\tau\cdot 2K_y \cdot \left(p_{r_0}^y(s) - p_{1}^y(s)\right)=\tau\cdot q_y(s)+Q_y\cdot \left(p_{r_0}^y(s+\tau) - p_{r_0}^y(s)\right).\\
%\left(p_{r_0}^y(s) - p_{1}^y(s)\right)=0; \  q_y(s)=0; \  \left(p_{r_0}^y(s+\tau) - p_{r_0}^y(s)\right)=0\\
%\text{consequently}\\
%0=0
  %   \end{align} 

Let in material balance equation \eqref{1-D MB} 
    
    \begin{align}\label{notation-MB}
    &I_q(s)=q\cdot\frac{1}{h\cdot\Delta_y\cdot \Delta_x}, \\
    &J_{1,0}^p=2K\cdot\frac{1}{ \Delta_x^2}, \\
    &L_q^{p_0}=\phi\cdot C_p. 
    \end{align}

    In above we let 
    \begin{equation}\label{delta-yh=1}
        \Delta_y\cdot h=1, \ C^0=\phi\cdot C_p,q(s)=q_x(s).
    \end{equation}
    \begin{equation}\label{notation-delta}
        \Delta_x=\Delta.
    \end{equation}
    %Then MB equation \eqref{1-D MB} takes form
    %\begin{equation}
     % 2K\cdot\left(p_0(s)-p_1(s) \right) %=-q\Delta+\phi C_p\frac{V_0}{V}\Delta%\cdot\frac{p_0(s)-p_0(s)}{\tau}
    %\end{equation}\begin{equation}
    %J_{1,0}\cdot\left(p_0(s)-p_1(s) \right) %=-I_q+I_s\cdot\frac{p_0(s+\tau)-p_0(s)}%{\tau}\\
    %\end{equation}
    
    %\begin{align}\label{notation-MB}
    %I_q=\frac{q\cdot\Delta_y\cdot h}{\Delta_x}\\
    %J_{1,0}=2K\cdot\frac{\Delta_y\cdot h }{\Delta_x^2} \\
    %I_s=\phi C_p\frac{V_0}{V}\frac{\Delta_y\cdot h}{\Delta_x}
    %\end{align}

  %Let 
   %\begin{equation}\label{delta-yh=1}
    %   \Delta_y\cdot h=1
   %\end{equation}
    %\begin{equation}\label{notation-delta}
     %  \Delta_x=\Delta
   %\end{equation}
   %
    Then MB equation \eqref{1-D MB} takes form
    \begin{equation}\label{1-D MB-t}
      2K\cdot\left(p_0(s)-p_1(s) \right) =-q\cdot\Delta+C^0\cdot\frac{p_0(s+\tau)-p_0(s)}{\tau}\cdot\Delta^2 .
    \end{equation}
    Note that if we will  use the finite difference\eqref{MB-fin-dif} as a MB  one will get
     \begin{align}\label{MB-fin-dif-1D}
      &L\cdot\Delta\cdot \Delta_y h \left( p_{0}(t+\tau)-p_{i,i}(t)\right)=\\ \nonumber
       & \tau\left(2\cdot (J\cdot(\Delta_y\cdot h))\cdot\frac{\left(p_{1}(t)-p_{0}(t)\right)}{\Delta}+q\delta_{i,j}\right) ,\\ \nonumber          &B(p)=0 \ \text{on} \ \partial\Omega \times (-\infty,\infty), 
    \end{align} 
    which is equivalent to \eqref{1-D MB-t} under \eqref{delta-yh=1},\eqref{notation-delta} if $J=K \ ,\text{and} \, L=C^0 .$
\end{subsection}
 %\akif{Generic time dependant MB in $1-D$ case  has a form  
 %\begin{equation}\label{1-D MB-t-Akif}
  %    2K\cdot\left(p_0(s)-p_1(s) \right) =-q\cdot\Delta+C^0\cdot\frac{p_0(s+\tau)-p_0(s)}{\tau}\cdot\Delta^2 .
   % \end{equation}
    
   %Important feature: there is no multiplication by $q$ in front of second term $RHS$  of the equation. To openly emphasise this we multiply second term by  $1$ . }
\begin{subsection}{Linear Steady State Material Balance(Algebraic) $p$ does not  depend on $s$.} 

Constrain that $p_i$ for all $i$-s are  $s$ independent can be changed to more physical one:
 One of the multiplier in  equation \eqref{SS-contstrain} is equal zero, namely

 \begin{equation}\label{SS-contstrain}
   \phi C_p\frac{V_0}{V}\cdot\frac{p_0(s+\tau)-p_0(s)}{\tau} \equiv 0 .
 \end{equation}

Physically \eqref{SS-contstrain} it can be stated that flow of fluid or fluid itself is such that if one can assume that

\begin{enumerate}
\item $\tau$ - the time interval of compression of the fixed volume $V_0$ with respect to whole volume $V$ of flow filtration is too big;
\item porosity $\phi$ is negligible ;
\item compressibility $C_p$ is negligible;
\item changes of the pressure in the block $V_0$ w.r.t. $\tau$ is negligible.
\end{enumerate}
\begin{remark}
     Although we consider fracture $ \frac{V_0}{V}=1$ \ we keep this factor for interpretation  in \eqref{SS-contstrain}for mathematical generality.
 \end{remark}
\begin{definition}\label{ss-MB-1-D}
We will say that MB balance is steady state if condition \eqref{SS-contstrain}  holds for all $s$ and $\tau.$ 
 \end{definition}
Then symmetrical, isotropic and steady-state steady state 1-D  Balance Equation  has the form  
\begin{equation}\label{ss-MB-1D}
    2K\cdot(p_0-p_1)=q\Delta .
\end{equation}

\begin{remark}
Note that in steady state MB $p_i$ is parameter(time in our intended application) independent. 
\end{remark}
%\akif{Consider Grid  In 1-D case \eqref{basic-MB-1D-Linear case} has a form(am I right?)
%\begin{equation}\label{basic-MB-1D-Linear case-delta}
 %   2K\cdot(p_0-p_1)=q\Delta
%\end{equation}
%}
To sew value of $p_0$ with pressure trace on the boundary of flow with given rate $q$ let us   consider 1-D flow of non-compressible fluid towards  gallery $x=0.$ The flow is  subjected to: (i) linear Darcy equation with fixed,  (ii) $s$ independent  pressure $p=p_e$ on the reservoir boundary $x=r_e,$ and (iii) production rate $q$ on the gallery  at $x=0$ as the inner boundary of the flow. Corresponding 
analytical model for $1-D$ pressure has a form 
\begin{align}
  \frac{d}{dx}\left(K \frac{d}{dx}p_{an}(x)\right)=0;\label{ss-eq}\\
   p_{an}(x)\Big|_{x=R_e}=p_e;\label{ss-bc-ext}\\
   -K\cdot
\frac{d p_{an}}{dx}\Big|_{x=0}=q .\label{ss-well-cond}
\end{align}
     
\begin{definition}\label{peac-ss-defin}
Let $1-D$ domain $(0;R_e)$ is split by  grid $\left[0,\Delta,2\cdot\Delta,\cdots N\cdot\Delta\right]$ were $N\Delta =r_e.$ 
  We will say that Peaceman problem is well posed w.r.t. MB \eqref{ss-MB-1D} for $1-D$  flows if for any given $\Delta$ exist $R_0$ depending on $\Delta$ s.t. analytical solution of the 1-D SS problem \eqref {ss-eq}-\eqref{ss-well-cond} satisfies equation.   
\end{definition}

\begin{equation}\label{MB-an-1D}
    -2K\cdot\left(p_{an}(\Delta)-p_{an}(R_0)\right)=q\cdot \Delta.
\end{equation}

\begin{theorem}
In order  Peaceman problem to be  well posed w.r.t. MB \eqref{ss-MB-1D} for $1-D$  flows
it is necessary and sufficient that 
    \begin{equation}\label{R_0-SS}
       R_0=\frac{\Delta}{2}. 
    \end{equation}
    \begin{proof}
        First analytical  solution $p_{an}(x)$ has a form
\begin{align}
 &p_{an}(x)=A\cdot x+ B , \ A=-\frac{q}{K}, \  
  B=p_e + \frac{q}{K} r_e. 
\end{align}
   Substituting $p_{an}(x)$ into \eqref{MB-an-1D}one will get 
   \begin{align}\label{ss-R-0-calc}
   & 2KA((\Delta)-R_0+(B-B))=q\Delta \ \text{or}, \          2(\Delta)=\Delta + 2R_0 \ \text{or}, \         \Delta=2R_0.
   \end{align}
  From above formula \eqref{R_0-SS} follows.
 
    \end{proof}
      
\end{theorem}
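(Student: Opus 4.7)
The plan is to reduce the question to solving a linear ODE with one Neumann and one Dirichlet boundary condition, substitute the explicit solution into the MB equation, and read off the unique value of $R_0$ that makes both sides agree. Since $q \neq 0$, the resulting equation for $R_0$ is a single nonsingular linear relation, so the identified value is both necessary and sufficient.

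First I would integrate \eqref{ss-eq} twice (using constancy of $K$ throughout the 1-D reservoir) to write $p_{an}(x) = A x + B$ for constants $A,B$. Applying the gallery condition \eqref{ss-well-cond} gives $A = -q/K$ immediately, and applying the outer Dirichlet condition \eqref{ss-bc-ext} yields $B = p_e + (q/K) R_e$. The specific value of $B$ will be irrelevant for what follows — this is, implicitly, the reason $R_0$ depends only on $\Delta$ and not on $R_e$ or $p_e$ in the stationary regime.

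Next I would substitute this affine profile into the MB identity \eqref{MB-an-1D}. Because the problem only involves the difference $p_{an}(\Delta) - p_{an}(R_0)$, the intercept $B$ cancels and the left-hand side collapses to $-2K A (\Delta - R_0) = 2q(\Delta - R_0)$. Setting this equal to $q\Delta$ and dividing by $q$ (which is nonzero by assumption, since otherwise the Peaceman problem is vacuous) yields $2(\Delta - R_0) = \Delta$, i.e. $R_0 = \Delta/2$. This establishes both directions: sufficiency, because the choice $R_0 = \Delta/2$ makes the analytical solution satisfy the MB identity for \emph{every} admissible triple $(q, p_e, R_e)$; and necessity, because the MB equation reduces to a single linear constraint on $R_0$ with a unique solution.

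I do not expect a genuine obstacle here. The only subtle point worth flagging in the write-up is that $B$ (hence the outer boundary data) drops out of the MB identity, so $R_0$ is independent of $R_e$ and $p_e$ — this is precisely the steady-state incarnation of Peaceman's observation, and it is what will later fail (in a controlled way) when compressibility is reinstated.
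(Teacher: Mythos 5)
Your proposal is correct and takes essentially the same route as the paper: write the affine analytical solution $p_{an}(x)=Ax+B$ with $A=-q/K$, substitute it into the material-balance identity \eqref{MB-an-1D}, note that $B$ cancels, and solve the resulting linear relation to obtain $R_0=\Delta/2$. Your extra remarks on necessity versus sufficiency and on the independence of $R_0$ from $r_e$ and $p_e$ merely make explicit what the paper's one-line computation and its follow-up remark already contain.
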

\
\begin{remark}
This theorem is elementary  but we brought it here to highlight reasoning why Peaceman formula for $R_0$ for SS regime does depend only on size of the block $\Delta$  but does not  depend on size of the domain, conductivity and rate of production. In in-fact follows from mean-value Lagrange theorem, Darcy Law and Divergence Theorem (Conservation law) for not-compressible Fluid.      
\end{remark} 
\end{subsection}

\begin{subsection}{1-D Linear PSS  Material Balance(Algebraic) and corresponding $R_0$}

Let define the PPS  constrain for the solution of algebraic  material balance equation assuming in addition that $p_0$ $p_1$ and $q$ are conditioned  as follows

\begin{definition}\label{PSS-const4MB}
 We will say that MB is steady state if
  \begin{equation}\label{q-s-ind}
      q(s)=q \ \text{is $s$ independent.} 
  \end{equation}  
  \begin{equation}\label{p1-p0-s-ind}
      p_0(s+\tau)-p_0(s)=q\cdot C_0 \cdot \tau .\ \text{and } \  C_0\text{  is $s$ independent.}
  \end{equation}
  \end{definition}
  From above obliviously follows that  difference
  
   \begin{equation}\label{p0(s+tau)-p0(s)-tau}
       p_1(s)-p_0(s)\ \text{is $s$ independent}.   
   \end{equation}
Then linear 1-D PSS Material Balance  will have form 
  \begin{equation}
          2K\cdot\left( p_1- p_{0}\right)
          =q\cdot\Delta\left(1-\phi c_p\cdot 1 \cdot C_0\Delta \right)=q\Delta\left(1-C_1\Delta \right).
      \end{equation}
  For simplicity we will let $C_1=1$
  \begin{subsubsection}      
 {Analytical model for PSS problem $1-D$} 
  
  Analytical model for the PSS regime has a form

\begin{align}
  \frac{\partial}{\partial x}\left(K \frac{\partial }{\partial x}p(x,t)\right)=
  \frac{\partial p}{\partial t}; \ \text{on} \ (0;r_e)\times(0,\infty) \label{pss-eq-t}\\
 \frac{\partial  p}{\partial x}\Big|_{x=r_e}=0\label{pss-bc-ext-t}\\
   -K\cdot\frac{\partial p_{an}}{\partial x}\Big|_{x=0}=q\label{pss-well-cond-t}\\
   p(x,0)=p_{an}(x)\label{pss-IC}
\end{align}

In \eqref{pss-IC} $p_{an}(x)$is solution BV problem

\begin{align}
  \frac{d}{dx}\left(K \frac{d}{dx}p_{an}(x)\right)=Q=\frac{q}{r_e}; \ \text{on} \ (0;r_e) \label{pss-eq}\\
 K\cdot
\frac{d p_{an}}{dx}\Big|_{x=r_e}=0\label{pss-bc-ext}\\
   -K\cdot
\frac{d p_{an}}{dx}\Big|_{x=0}=q\label{pss-well-cond}
\end{align}

  It is evident that analytical solution for PSS IBVP has a form  

  \begin{equation}\label{pss-1-d-sol}
  p_{pss}(x,t)=w(x)+A_0 t, \ \ \text{function} \ w(x)=p_{an}(x). \end{equation}
  were $A_0=\frac{1}{r_e}\cdot q$.
 Solution  $p_{an}(x)$ of BVP \eqref{pss-eq}-\eqref{pss-well-cond}  is denoted as $w(x)$ for convenience.   PSS regime, generate  pressure $p_{ss}(x,t)$ is called pss-pressure. 
 
 General solution for $p_{an}(x)$ has a form 
 \begin{equation}\label{p_an}
p_{an}(x)=w(x)=Ax^2+Bx.     
 \end{equation}
 Here

 \begin{equation}\label{B-PSS-form}
B= \frac{q}{K} ,
 \end{equation}
\and is recovered  from BC \eqref{pss-well-cond}.
Parameter  $A$ consequently from RHS of \eqref{pss-eq} and BC \eqref{pss-bc-ext} 
 
 \begin{equation}\label{A-PSS-form}
     A=\frac{B}{2r_e}= - \frac{q}{2Kr_e}.
 \end{equation}
\begin{remark}
    Note that auxiliary function $w(x)$ by construction is vanishing on the well at $x=0$. 
\end{remark}
 \begin{definition}\label{peac-pss-defin}
  We will say that Peaceman problem for PSS is well posed w.r.t. time dependent MB \eqref{1-D MB-t} for $1-D$  flows if for any given $\Delta,$ and $r_e$ exist $R_0^{pss}(\Delta,r_e)$ depending on $\Delta$ and $r_e$ s.t. analytical solution of the 1-D PSS problem \eqref {pss-eq}-\eqref{pss-well-cond} satisfies equation and in addition  constrains \eqref{q-s-ind} and \eqref{p1-p0-s-ind} hold.   
\end{definition}
 \begin{theorem}\label{R-0-pss-peaceman}

     Peaceman problem is well posed w.r.t. time dependent MB for $1-D$, a.e. exists $R_0^{pss}(\Delta,r_e)$ s.t. $p_1(t)=p(\Delta,t)$, $p_0(t)=p(R_0,t)$ and $q$ s.t. $p(x,t)$ and $q$ a subject to time dependent  MB equation and both  constrains in the definition \ref{PSS-const4MB}.
     Moreover   the following limiting result follows
        \begin{equation}\label{limit-R-pss}
            \boxed{\lim_{r_e\to\infty}R_0^{pss}(\Delta,r_e) = R_0.}
             \end{equation}
     \end{theorem}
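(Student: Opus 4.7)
The plan is to take the explicit analytical PSS solution produced by \eqref{pss-1-d-sol}--\eqref{A-PSS-form}, namely $p(x,t) = w(x) + A_0 t$ with $w(x) = A x^2 + B x$, $B = q/K$, $A = -q/(2 K r_e)$ and $A_0 = q/r_e$, and plug it into the time-dependent MB \eqref{1-D MB-t} under the identifications $p_0(t) = p(R_0, t)$ and $p_1(t) = p(\Delta, t)$. The goal is to solve the resulting relation for $R_0 = R_0^{pss}(\Delta, r_e)$.

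First I would check that the two PSS structural constraints in Definition \ref{PSS-const4MB} are automatically satisfied by this candidate. Constraint \eqref{q-s-ind} is immediate since $q$ enters the IBVP as a constant flux datum. For \eqref{p1-p0-s-ind}, the linearity of $p(x,t)$ in $t$ gives
\begin{equation*}
p_0(s+\tau) - p_0(s) = A_0\, \tau = \frac{q}{r_e}\,\tau,
\end{equation*}
which is $s$-independent and proportional to $q\tau$; this pins down the constant $C_0$ in the definition as $C_0 = 1/r_e$.

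Second, I would perform the substitution. Using $p_0(s) - p_1(s) = w(R_0) - w(\Delta)$ and $(p_0(s+\tau) - p_0(s))/\tau = A_0$, the MB \eqref{1-D MB-t} collapses to an algebraic identity
\begin{equation*}
2K\bigl[A(R_0^2 - \Delta^2) + B(R_0 - \Delta)\bigr] = -q\Delta + C^0\,\frac{q}{r_e}\,\Delta^2.
\end{equation*}
Inserting the explicit values of $A$ and $B$ and cancelling the common factor $q$ reduces the identity to the quadratic
\begin{equation*}
R_0^2 - 2 r_e R_0 + r_e \Delta = 0,
\end{equation*}
whose physically admissible root is
\begin{equation*}
R_0^{pss}(\Delta, r_e) = r_e\bigl(1 - \sqrt{1 - \Delta/r_e}\bigr).
\end{equation*}
For the limit \eqref{limit-R-pss}, Taylor-expanding the square root for $\Delta/r_e \ll 1$ gives $R_0^{pss}(\Delta, r_e) = \Delta/2 + \Delta^2/(8 r_e) + O(r_e^{-2})$, so $R_0^{pss} \to \Delta/2 = R_0$ as $r_e \to \infty$, in agreement with \eqref{R_0-SS}.

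The main obstacle is the selection of root: the quadratic has two roots $r_e(1 \pm \sqrt{1 - \Delta/r_e})$, and one must argue that only the minus root is admissible as an equivalent well-block radius. This follows from the requirement $R_0 \in (0, \Delta)$ together with $\Delta \leq r_e$ (so that the grid block fits inside the reservoir, which also guarantees a non-negative discriminant). A secondary but non-trivial bookkeeping issue is the reconciliation of sign conventions between \eqref{1-D MB-t}, the Neumann boundary condition \eqref{pss-well-cond}, and the SS MB \eqref{ss-MB-1D}; these have to be tracked carefully so that the terms $-q\Delta$ on the right-hand side and $2K B(R_0 - \Delta)$ on the left cancel cleanly and leave only the quadratic coefficient in $R_0^2/r_e$ that drives the finite-$r_e$ correction to Peaceman's formula.
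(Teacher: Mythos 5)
Your proposal follows essentially the same route as the paper's proof: substitute the explicit PSS profile $p(x,t)=w(x)+A_0t$ into the time-dependent MB \eqref{1-D MB-t}, check the two constraints of Definition \ref{PSS-const4MB}, reduce to a quadratic equation in $R_0^{pss}$, select the physically admissible root, and expand as $r_e\to\infty$ to recover $\Delta/2=R_0$. The only divergence is sign bookkeeping: the paper's stated coefficients are internally inconsistent (it writes $A=\frac{B}{2r_e}=-\frac{q}{2Kr_e}$ while $B=\frac{q}{K}$), and your literal choice $A=-\frac{q}{2Kr_e}$, $B=\frac{q}{K}$ leads to the quadratic $R_0^2-2r_eR_0+r_e\Delta=0$ instead of the paper's \eqref{quad-eq-R-pss}, $\left(R_0^{pss}\right)^2+2r_eR_0^{pss}-\left(r_e\Delta+\Delta^2\right)=0$; both admissible roots nevertheless converge to $\Delta/2$, so the conclusion \eqref{limit-R-pss} is unaffected, and your explicit treatment of root selection and of the discriminant condition $\Delta\le r_e$ is in fact more careful than the paper's.
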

     \begin{proof}
      Proof follows from straightforward calculations that $R_0^{pss}(\Delta\,r_e)$ is subject to equation
      \begin{equation}
          2K\cdot\left( p_{pss}(\Delta,t)- p_{pss}(R_0,t)\right)=2K\cdot
          \left( w(\Delta)- w(R_0)\right)=q\Delta\left(1-\phi c_p\cdot 1\cdot C_2\frac{\Delta}{r_e}\right).
      \end{equation}
      From explicit presentation for $w(x)$ follows that $R_0$ should be subject for equation
      \begin{equation}\label{R0-pss-1-D}
      \frac{\Delta^2}{2r_e}+\Delta - \frac{\left(R_0^{pss}\right)^2}{2r_e}-R_0^{pss}=\frac{\Delta}{2}-C_3\frac{\Delta^2}{2r_e},
      \end{equation}
      for constant $C_3$ depending on $c_p, \ C_2$ only.
      From above we will get explicitly formula for $R_0^{PSS}:$
      \begin{equation}\label{R0-pss-1-D-expl}
      (1+C_3)\frac{\Delta^2}{2r_e}+ \frac{\Delta}{2}=R_0^{pss}+\frac{\left(R_0^{pss}\right)^2}{2r_e}.
      \end{equation}
      Assuming in above for simplicity $C_3=0$ one can get

 \begin{equation}\label{quad-eq-R-pss}
 \frac{(R_0^{pss})^2}{1}+2r_e \cdot R_0^{pss}-\left(r_e\cdot \frac{\Delta }{1}+\frac{\Delta^2}{1}\right)=0.   \end{equation}

From here follows that positive branch of the root satisfies chain of the equations

\begin{equation}
R_0^{pss}=\frac{-2r_e+\sqrt{4r_e^2+4\Delta r_e+8\Delta^2}}{2}=\frac{\Delta }{1+\sqrt{1+\frac{\Delta}{r_e}+2\frac{\Delta^2}{r_e^2}}}+\frac{2\Delta^2}{r_e}. 
   \end{equation}\label{exp-R-pss-chain-1d}

From above  chain of equations one can get   statement of the Theorem \ref{R-0-pss-peaceman}.
\end{proof}

     The following theorem follows straight forward from implicit differentiation of \eqref{R0-pss-1-D}
\begin{theorem}\label{mon-pss-1-D}
    Let $\Delta$ and all parameters bur $r_e$ of the problem are fixed. Then for $r_e$ big enough $R_0^{pss}$ as a function of $r_e$ decreasing.
\end{theorem}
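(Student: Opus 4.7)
The plan is to apply implicit differentiation to the defining equation \eqref{R0-pss-1-D} for $R_0^{pss}$ and then determine the sign of $dR_0^{pss}/dr_e$ using the asymptotic information already proved in Theorem \ref{R-0-pss-peaceman}. For transparency I will first do the calculation in the simplified case $C_3=0$ (equation \eqref{quad-eq-R-pss}) and then remark that the general case is the same with harmless cosmetic changes.

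Starting from the quadratic relation
\begin{equation*}
(R_0^{pss})^2 + 2r_e\,R_0^{pss} - \bigl(r_e\Delta + \Delta^2\bigr) = 0,
\end{equation*}
with all parameters fixed except $r_e$, I would differentiate implicitly in $r_e$ and solve for the derivative, obtaining schematically
\begin{equation*}
\frac{dR_0^{pss}}{dr_e} \;=\; -\,\frac{R_0^{pss}-\Delta/2}{R_0^{pss}+r_e}.
\end{equation*}
Since $R_0^{pss}>0$ and $r_e>0$, the denominator is strictly positive, so the sign of $dR_0^{pss}/dr_e$ is controlled entirely by the sign of $R_0^{pss}-\Delta/2$.

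The next step is to show $R_0^{pss}>\Delta/2$ for $r_e$ large enough. This is where the asymptotic result of Theorem \ref{R-0-pss-peaceman} enters: it was shown that $R_0^{pss}\to \Delta/2$ as $r_e\to\infty$, and from the explicit formula $R_0^{pss}=-r_e+\sqrt{r_e^2+r_e\Delta+\Delta^2}$ one can check directly that $R_0^{pss}-\Delta/2>0$ for all finite $r_e$ by comparing $r_e^2+r_e\Delta+\Delta^2$ with $(r_e+\Delta/2)^2=r_e^2+r_e\Delta+\Delta^2/4$; the former is strictly larger, so $\sqrt{r_e^2+r_e\Delta+\Delta^2}>r_e+\Delta/2$, hence $R_0^{pss}>\Delta/2$. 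Substituting this into the formula for the derivative gives $dR_0^{pss}/dr_e<0$, which is the desired monotonicity.

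The only place where care is needed, and the main (mild) obstacle, is the general case $C_3\neq 0$: the same implicit differentiation still produces a derivative whose sign is controlled by the quantity $R_0^{pss}-\Delta/2$ (up to a benign shift by a multiple of $\Delta^2/r_e$), and the positivity of that quantity for $r_e$ sufficiently large is again secured by the limit \eqref{limit-R-pss}. For $r_e$ big enough $R_0^{pss}$ sits in an arbitrarily small neighborhood of $\Delta/2$ from above, and the correction term coming from $C_3$ is $O(1/r_e)$, so it cannot reverse the sign. This completes the argument that $R_0^{pss}(\Delta,r_e)$ is eventually a decreasing function of $r_e$.
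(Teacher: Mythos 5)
Your core computation is correct, and it is exactly the route the paper intends: the paper's entire ``proof'' is the one-line remark that the theorem follows from implicit differentiation of \eqref{R0-pss-1-D}, and your differentiation of \eqref{quad-eq-R-pss}, the resulting formula $\frac{dR_0^{pss}}{dr_e}=-\frac{R_0^{pss}-\Delta/2}{R_0^{pss}+r_e}$, and the explicit comparison of $r_e^2+r_e\Delta+\Delta^2$ with $\left(r_e+\Delta/2\right)^2$ are all right. In the case $C_3=0$ you actually prove more than the statement asks: $R_0^{pss}$ is strictly decreasing for \emph{every} $r_e>0$, not merely for $r_e$ large.

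The genuine gap is in your final paragraph, i.e.\ the case $C_3\neq 0$ --- which is the case the theorem formally refers to, since \eqref{R0-pss-1-D} carries the constant $C_3$. The argument ``the correction coming from $C_3$ is $O(1/r_e)$, so it cannot reverse the sign'' is invalid, because the quantity whose sign you need, $R_0^{pss}-\Delta/2$, is itself $O(1/r_e)$: both terms live at the same order, so an $O(1/r_e)$ correction can perfectly well flip the sign, and the limit \eqref{limit-R-pss} by itself only says $R_0^{pss}\to\Delta/2$, not that the limit is approached from above. Concretely, for general $C_3$ equation \eqref{R0-pss-1-D-expl} is equivalent to
\begin{equation*}
\left(R_0^{pss}\right)^2+2r_e\,R_0^{pss}-\left(r_e\Delta+(1+C_3)\Delta^2\right)=0 ,
\end{equation*}
the derivative formula is unchanged, but your comparison of squares now gives $R_0^{pss}>\Delta/2$ if and only if $1+C_3>\frac{1}{4}$; expanding the positive root yields $R_0^{pss}-\frac{\Delta}{2}=\left(\frac{3}{4}+C_3\right)\frac{\Delta^2}{2r_e}+O(r_e^{-2})$. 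Thus if $C_3<-\frac{3}{4}$ the radius approaches $\Delta/2$ from below and is eventually \emph{increasing}, and if $C_3=-\frac{3}{4}$ it is identically $\Delta/2$. The fix stays entirely within your method: observe that in the paper's derivation $C_3$ is built from $\phi$, $c_p$ and $C_2$, all nonnegative for physical data, so $C_3\geq 0$ and hence $1+C_3>\frac{1}{4}$; then the explicit comparison you made for $C_3=0$ goes through verbatim with $\Delta^2$ replaced by $(1+C_3)\Delta^2$, and the monotonicity follows for all $r_e$, not only large $r_e$.
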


 \end{subsubsection} 
 \end{subsection}
     %\akif{%Note that ratio $\frac{V_0}{V}$ is a factor of the domain of compressiblity  impact  $V_0$ w.r.t. $V$, $c_p$ fluid compressibility, $\phi$ porosity,  $C_0$ rate of gain of material due to compressiblity (see Definition \ref{PSS-const4MB}. All together is just comprehensive   synthetic   impact parameter in the media w.r.t. compressiblity issue. Parameter $r_e$ may well represent multiple production interference impact and it can significantly help in case of multiple interfering fractures.  }

\begin{subsection}{1-D linear MB-constrains for $BD$ regime and corresponding well box radius $R_0$ }
    MB for BD regime can be stated in form of the definition for convenience in terms of keys input  constrains on  $q(s), \ p_i(s) \ \text{and} \ p_0(s+\tau).$
    
    \begin{definition}\label{MB-BDD-constr}
    
    Algebraic MB-constrains for boundary dominated is stated as follows. Exist constants $Q_0, \ \mathbf{P}_1. \mathbf{P}_0,$ s.t. items below hold varables $p_i$ and $q$ in MB equation
        \begin{enumerate}
            \item \begin{equation}
                \frac{q(s)}{p_0(s)}=Q_0(r_e)
            \end{equation}
            in above $Q_0(r_e)$ is $\Delta$ and $s$ independent constant
            \item 
            \begin{equation}
                \frac{p_1(s)}{p_0(s)}=\mathbf{P}_1(\Delta,r_e)  
            \end{equation}
            in above $\mathbf{P}_1(\Delta,r_e)$ is constant depending on  $\Delta$ and $r_e$, but not $s$. 
            \item
            \begin{equation}
                \frac{p_0(s+\tau)}{p_0(s)}=\mathbf{P}_0(\Delta,r_e)  \frac{e^{-C(K,r_e)\cdot\tau}-1}{\tau}
            \end{equation}
            in above $\mathbf{P}_0(\Delta,r_e)$ is constant depending on  $\Delta$ and $r_e$, but not $s$.
        \end{enumerate}
    \end{definition}

    \end{subsection}

    \begin{subsubsection}{Boundary dominated analytical problem}
    Consider analytical problem
    \begin{align}\label{BDD-anal-prob}
      &  \frac{\partial}{\partial x }\left(K\frac{\partial }{\partial x}u_0(x,t)\right)=c_0\cdot\frac{\partial u_0(x,t)}{\partial t}\\
       & u(x,t)\Big|_{x=0}=0\\
        &K\frac{\partial u_0(x,t)}{\partial x}\Big|_{x=r_e}=0\\
        &u_0(x,0)=\phi_0(x) \  \text{here} \ \phi_0(x) \text{is first eigenfunction.} \\
& u_0(x,t)\Big|_{t=0}=\phi_0(x) \
    \end{align}

Assuming for simplicity that $c_0=1$ It is not difficult to prove the following 
\begin{proposition}\label{MB-4-ansol}
    
Let $u_0(x,t)$ be an analytical  solution  of IBVP \eqref{BDD-anal-prob}:
\begin{equation}\label{bd-sol-0}
    u_0(x,t)=e^{-K\lambda_0 t} \sin(\lambda_0 x).
\end{equation}
Define variable in MB equation as 
\begin{align}
p_0(s)= u_0(R_0,s) \label{p_0-BD} ; \\   
p_1(s)=u_0(\Delta,s) \label{p_1-BD} ;\\ 
q(s)=K\frac{\partial u_0}{\partial x}\Big|_{x=0}.\label{q-BD}
\end{align}
Then  all items in Definition \ref{MB-BDD-constr} well defined for specific constants $Q_0, \ \mathbf{P}_0, \ \mathbf{P}_1$ for any $R_0$ and  $\Delta,$ 
and
\begin{equation}\label{lambda-0}
 \lambda_0=\frac{\pi}{2\cdot r_e}   
\end{equation}
\end{proposition}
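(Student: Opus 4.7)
The plan is to verify the proposition by direct substitution of the stated closed-form solution into the IBVP \eqref{BDD-anal-prob}, and then to read off the three constants $Q_0$, $\mathbf{P}_1$, $\mathbf{P}_0$ by straightforward trigonometric manipulation.

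First I would confirm that $u_0(x,t)=e^{-K\lambda_0 t}\sin(\lambda_0 x)$ satisfies the parabolic equation by separation of variables: computing $u_{0,t}$ and $Ku_{0,xx}$ shows the two agree under the exponential--spatial relation built into the ansatz, and the homogeneous Dirichlet condition at $x=0$ is automatic from $\sin(0)=0$. The Neumann condition $Ku_{0,x}|_{x=r_e}=0$ requires $\cos(\lambda_0 r_e)=0$, and selecting the smallest positive root yields \eqref{lambda-0}, i.e.\ $\lambda_0=\pi/(2r_e)$; this simultaneously identifies $\sin(\lambda_0 x)$ as the first eigenfunction $\phi_0(x)$ required by the initial condition, so that the IBVP \eqref{BDD-anal-prob} is solved in full.

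The remaining task is to match each of the three items in Definition \ref{MB-BDD-constr}. From \eqref{p_0-BD}--\eqref{q-BD} the common factor $e^{-K\lambda_0 s}$ is shared by $p_0(s)=e^{-K\lambda_0 s}\sin(\lambda_0 R_0)$, $p_1(s)=e^{-K\lambda_0 s}\sin(\lambda_0\Delta)$ and $q(s)=K\lambda_0\,e^{-K\lambda_0 s}$, so the $s$-dependence cancels from every ratio. One reads off $Q_0(r_e)=K\lambda_0/\sin(\lambda_0 R_0)$, which is independent of $\Delta$ and $s$ (item (i)), and $\mathbf{P}_1(\Delta,r_e)=\sin(\lambda_0\Delta)/\sin(\lambda_0 R_0)$, which is independent of $s$ (item (ii)). For item (iii), the ratio $p_0(s+\tau)/p_0(s)$ collapses to the pure exponential $e^{-K\lambda_0\tau}$; identifying $C(K,r_e):=K\lambda_0=K\pi/(2r_e)$ and choosing $\mathbf{P}_0(\Delta,r_e)$ so as to absorb the remaining constants allows the right-hand side of item (iii) to match this exponential decay in a $\Delta$- and $s$-consistent way.

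I expect the main obstacle to be conceptual rather than computational: aligning the continuous ratio $p_0(s+\tau)/p_0(s)$ on the left of item (iii) with the difference-quotient form $(e^{-C\tau}-1)/\tau$ on the right, in a way that $\mathbf{P}_0$ and $C$ can be chosen to be independent of $s$ and $\tau$. All other ingredients reduce to elementary trigonometric identities once the eigenvalue $\lambda_0=\pi/(2r_e)$ is in hand, and the entire proof is then a short calculation.
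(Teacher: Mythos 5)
Your overall strategy --- substitute the closed form into \eqref{BDD-anal-prob}, extract $\lambda_0=\pi/(2r_e)$ from the Dirichlet/Neumann pair, and read the constants off the ratios --- is exactly the verification the paper performs (implicitly, inside the proof of Lemma \ref{BD-R_0-generic}), and your identifications $Q_0=K\lambda_0/\sin(\lambda_0 R_0)$ and $\mathbf{P}_1=\sin(\lambda_0\Delta)/\sin(\lambda_0 R_0)$ are right. However, your first step does not go through as you describe it: with $u_0=e^{-K\lambda_0 t}\sin(\lambda_0 x)$ one has $u_{0,t}=-K\lambda_0\,u_0$ but $Ku_{0,xx}=-K\lambda_0^2\,u_0$, so the two sides of the heat equation do \emph{not} agree; the temporal factor must be $e^{-K\lambda_0^2 t}$. (Formula \eqref{bd-sol-0} as printed is a typo; the paper itself uses $e^{-K\lambda_0^2 s}$ in \eqref{p_0-bd-an}--\eqref{q(s)-BD-an}.) Your constant $C(K,r_e)=K\lambda_0=K\pi/(2r_e)$ inherits this error: the decay rate is $K\lambda_0^2=K\pi^2/(4r_e^2)$.

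The more serious gap is item (iii) of Definition \ref{MB-BDD-constr}. You correctly flag the mismatch --- the left side $p_0(s+\tau)/p_0(s)$ is the pure exponential $e^{-K\lambda_0^2\tau}$, while the right side has the difference-quotient form $\mathbf{P}_0\,(e^{-C\tau}-1)/\tau$ --- but then assert it can be settled by ``choosing $\mathbf{P}_0$ so as to absorb the remaining constants.'' No such choice exists: as $\tau\to 0$ the left side tends to $1$ while the right side tends to $-\mathbf{P}_0 C$, so a pure exponential can never equal a constant multiple of $(e^{-C\tau}-1)/\tau$ identically in $\tau$ with $\tau$-independent constants. The resolution, which is how the paper actually uses this constraint (in Lemma \ref{BD-R_0-generic} the factor $(e^{-\lambda^2\tau}-1)/\tau$ arises from the difference $\left(p_0(s+\tau)-p_0(s)\right)/\tau$, and the radial analogue, Assumption \ref{BD-assump for MB eq} item A-3, is phrased this way), is to read item (iii) as the normalized difference quotient
\begin{equation*}
\frac{1}{\tau}\left(\frac{p_0(s+\tau)}{p_0(s)}-1\right)=\frac{e^{-K\lambda_0^2\tau}-1}{\tau},
\end{equation*}
which holds exactly for every $s$ and $\tau$ with $\mathbf{P}_0=1$ and $C=K\lambda_0^2$. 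With that reading and the corrected exponent, the rest of your computation goes through and coincides with the paper's.
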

    
    \begin{remark}
   Note that all initial Data for all $\mathbf{three \ analytical \ problems }$ are  assigned in such way that corresponding  productivity index are time independent.   
   
    \end{remark}
 It  important to state that existence of the constants is of main interest, and it will addressed below. We brought the Proposition \ref{MB-4-ansol} in order to follow frame of the construction and motivate   Peaceman well-posedness as
 \begin{definition}\label{peac-BDD-well-pos-def}
  We will say that Peaceman problem for BD regime is well posed w.r.t. time dependent MB \eqref{BDD-anal-prob} for $1-D$  flows if for any given $\Delta,$ and $r_e$ exist $R_0^{BD}(\Delta,r_e)$ depending on $\Delta$ and $r_e$ s.t. analytical solution of the 1-D BD problem \eqref {BDD-anal-prob} satisfies equation and in addition  constrains in the definition \eqref{MB-BDD-constr}  hold.   
\end{definition}
 \begin{lemma}\label{BD-R_0-generic}
   
Assume that $R_0^{bd}<\Delta$ then   for Peaceman's well posed for BD regime of the filtration it is sufficient
 \begin{equation}\label{r-0-bd-geneic}
   \sin (\lambda_0 R_0^{bd})-\sin(\lambda_0\cdot \Delta)+\frac{\lambda_0\Delta}{2} =\sin(\lambda_0\cdot R_0^{bd})\cdot\frac{1}{2K}\cdot\frac{e^{-\lambda_0^2\tau}-1}{\tau},
\end{equation} 
Moreover as $r_e \to \infty ,$ and $\tau\to 0$ $R_{0}^{bd}(\lambda_0,\tau)$ converges to Peaceman steady state $R_0.$ 

    Here $\lambda_0$ is the first eigenvalue, and $R_0^{bd}$, which  deliver solution to transcendent equation \eqref{r-0-bd-geneic} defines by value of $\Delta$, but and addition depend on $r_e,$  and $\tau \,\ K\,\ c_p$.
 \end{lemma}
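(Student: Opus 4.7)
The plan is to derive equation \eqref{r-0-bd-geneic} by direct substitution of the analytical BD solution into the time-dependent material balance equation \eqref{1-D MB-t}, and then perform an asymptotic analysis of the resulting transcendental equation to obtain the steady-state limit $R_0^{bd}\to R_0=\Delta/2$.

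First, I would record the three quantities assigned in Proposition \ref{MB-4-ansol}: using $u_0(x,t)=e^{-K\lambda_0 t}\sin(\lambda_0 x)$ from \eqref{bd-sol-0}, we have $p_0(s)=e^{-K\lambda_0 s}\sin(\lambda_0 R_0^{bd})$, $p_1(s)=e^{-K\lambda_0 s}\sin(\lambda_0\Delta)$, and $q(s)=K\lambda_0 e^{-K\lambda_0 s}$. The crucial structural observation is that every quantity appearing in \eqref{1-D MB-t} is proportional to the single temporal mode $e^{-K\lambda_0 s}$, so the MB equation reduces, after dividing through by this exponential, to an algebraic equation whose unknown is the single scalar $R_0^{bd}$. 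Rearranging and collecting terms gives exactly the transcendental equation \eqref{r-0-bd-geneic}. This establishes the sufficiency statement of the lemma.

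Next I would pass to the joint limit $r_e\to\infty$, $\tau\to 0$. Note that $\lambda_0=\pi/(2r_e)\to 0$ by \eqref{lambda-0}, and the factor $(e^{-\lambda_0^2\tau}-1)/\tau$ stays bounded (tending to $-\lambda_0^2$ as $\tau\to 0$, and in any case bounded by $\lambda_0^2$). Using $\sin(\lambda_0 x)=\lambda_0 x+O(\lambda_0^3)$, the left-hand side of \eqref{r-0-bd-geneic} becomes
\begin{equation*}
\lambda_0\bigl(R_0^{bd}-\Delta+\tfrac{\Delta}{2}\bigr)+O(\lambda_0^3)=\lambda_0\bigl(R_0^{bd}-\tfrac{\Delta}{2}\bigr)+O(\lambda_0^3),
\end{equation*}
while the right-hand side is $O(\lambda_0)\cdot O(\lambda_0^2)=O(\lambda_0^3)$. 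Dividing through by $\lambda_0$ and letting $r_e\to\infty$, $\tau\to 0$ shows $R_0^{bd}\to\Delta/2$, which by the steady-state Theorem is precisely $R_0$.

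The main obstacle I expect is the rigorous handling of the transcendental equation as a defining relation for $R_0^{bd}$: one must verify that \eqref{r-0-bd-geneic} admits a unique root in $(0,\Delta)$ in the relevant parameter range, and that this root depends continuously on $(r_e,\tau)$ so that the asymptotic expansion above actually yields convergence rather than merely consistency. This is most cleanly done via the implicit function theorem: treating the equation as $F(R_0^{bd};\lambda_0,\tau)=0$, one checks $F(\Delta/2;0,0)=0$ and $\partial_{R_0^{bd}}F(\Delta/2;0,0)\neq 0$ (the derivative is dominated by the $\lambda_0\cos(\lambda_0 R_0^{bd})$ term on the left, which is nonzero for small $\lambda_0$), from which local existence, uniqueness and continuity, and therefore the limit \eqref{limit-R-pss}-type identity $R_0^{bd}\to R_0$, all follow. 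A secondary subtlety is that the two limits $r_e\to\infty$ and $\tau\to 0$ are taken jointly; since both correction terms scale as $O(\lambda_0^2)$ and $O(\tau)$ respectively and enter only through bounded factors, the order in which they are taken does not affect the limit, but this uniformity must be spelled out.
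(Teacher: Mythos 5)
Your proposal is correct and follows essentially the same route as the paper: substitute $p_0(s)=u_0(R_0^{bd},s)$, $p_1(s)=u_0(\Delta,s)$, $q(s)=K\partial_x u_0|_{x=0}$ into the time-dependent MB equation, cancel the common temporal exponential to obtain the transcendental equation \eqref{r-0-bd-geneic}, and then linearize in $\lambda_0$ to conclude $R_0^{bd}\to\Delta/2=R_0$. The only differences are cosmetic or additive: where you use the Taylor expansion $\sin(\lambda_0 x)=\lambda_0 x+O(\lambda_0^3)$ and an implicit-function-theorem argument for existence, uniqueness, and continuity of the root, the paper uses a sum-to-product identity with the approximation $\cos\frac{\lambda(R_0^{bd}+\Delta)}{2}\approx 1$ (and, in the follow-up theorem, the Lagrange mean value theorem), leaving existence of the root to a remark.
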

\begin{proof}
To proof the Theorem it is suffice write down solution of the problem  \ref{BDD-anal-prob} 
and calculate
\begin{equation}\label{bd-sol-q}
    q(s)=K\frac{\partial u_0(x,t)}{\partial x}\Big|_{x=0}=e^{-K\lambda_0 s}\lambda_0 \cos(\lambda_0 x)\Big|_{x=0} =K\cdot e^{-K\lambda_0 s}\lambda_0
\end{equation}

Here $\lambda_0$ is First eigenvalue. 
    
   For connivance let rewrite MB 

     \begin{equation}\label{1-D MB-t-BD}
      2K\cdot\left(p_0(s)-p_1(s) \right) =-q(s)\cdot\Delta+ 1\cdot\frac{p_0(s+\tau)-p_0(s)}{\tau}\cdot\Delta
    \end{equation}
   and let $R_0=R_0^{bd}<\Delta$ be unknown then
   \begin{align}
     &p_0(s)=e^{-K\lambda_0^2 s} \sin(\lambda_0 R_0^{bd}),\label{p_0-bd-an}\\
     &p_1(s)=e^{-K\lambda_0^2 s} \sin(\lambda_0^2\Delta) \label{p_1-bd-an}\\
     &p_0(s+\tau)=e^{-K\lambda_0^2( s+\tau)} \sin(\lambda_0 R_0^{bd}),\label{p_0-tau-bd-an}\\
     &q(s)=K\cdot e^{-K\lambda_0^2 s}\lambda_0\label{q(s)-BD-an}
   \end{align}
Let $\lambda=\lambda_0$ be the first eigenvalue.  
   Using \eqref{p_0-bd-an}-\eqref{q(s)-BD-an} MB \eqref{1-D MB-t-BD} takes form
    \begin{equation}\label{1-D MB-t-bd-an}
      2K\cdot e^{-\lambda^2 s}\left(\sin(\lambda R_0^{bd} -\sin(\lambda \Delta) \right) =-K\lambda\cdot\Delta e^{-\lambda^2 s}+ 1\cdot\sin(\lambda R_0^{bd})\cdot \frac{e^{-\lambda^2(s+\tau)}-e^{-\lambda^2s}}{\tau}\cdot\Delta.
    \end{equation}
    After some factoring by$e^{-\lambda^2 s}$ equation \eqref{1-D MB-t-bd-an} takes form
  \begin{equation}\label{1-D MB-t-bd-an-1}
      2K\cdot \left(\sin(\lambda R_0^{bd} -\sin(\lambda \Delta) \right) =-K\lambda\cdot\Delta +1\cdot 1\cdot\sin(\lambda R_0^{bd})\cdot \frac{e^{-\lambda^2\cdot\tau}-1}{\tau}\cdot\Delta
    \end{equation}  
    
    Dividing \eqref{1-D MB-t-bd-an-1} by $2K$ we will get  
\eqref{r-0-bd-geneic} in the main Theorem \ref{BD-R_0-generic} .

In order analytical to satisfy MB equation it is sufficient to assume that  $R_0^{bd}(\Delta,r_e,\tau)$ solve equation \eqref{r-0-bd-geneic}.
One can simplify above equation \eqref{1-D MB-t-bd-an-1}. Namely
\begin{equation}\label{1-D MB-t-bd-simp}
      2K\cdot 2 \left(\sin\frac{\lambda\cdot(R_0^{bd}-\Delta)}{2}\cos\frac{\lambda(R_0^{bd}+\Delta)}{2}\right) =
      -K\lambda\cdot\Delta + 1\cdot\sin(\lambda R_0^{bd})\cdot\lambda^2\cdot \frac{e^{-\lambda^2\cdot\tau}-1}{\lambda^2\tau}\cdot\Delta.
\end{equation}

As $\tau \to 0$ from \eqref{1-D MB-t-bd-simp} on can have
\begin{equation}\label{1-D MB--bd-simp1}
  4K\cdot  \left(\sin\frac{\lambda\cdot(R_0^{bd}-\Delta)}{2}\cos\frac{\lambda(R_0^{bd}+\Delta)}{2}\right) = 
      -K\lambda\cdot\Delta + 1\cdot\sin(\lambda R_0^{bd})\cdot\lambda^2\cdot(-1)\cdot\Delta.
\end{equation}
 Under assumption that $\lambda$ to be such that $\cos\frac{\lambda(R_0^{bd}+\Delta)}{2} \approx 1 $ Last equation provide the compact approximation for $R_0^{bd}$ computation is small enough: 
 \begin{equation}\label{1-D MB--bd-aprox-0}
     R_0^{bd}\approx\frac{\Delta}{2}-\frac{1}{2K}\lambda^2R_0^{bd}\Delta
 \end{equation}
 or equivalently
\begin{equation}\label{1-D MB--bd-aprox}
     R_0^{bd}\approx\frac{\frac{\Delta}{2}}{1+\frac{1}{2K}\lambda^2\Delta}.
 \end{equation} 
 Finally taking into account explicit formula for $\lambda=\frac{\pi}{2r_e}$ and assuming that $\frac{\Delta}{8K}\frac{\pi^2}{r_e^2}$is small enough  we will get the approximate formula for $R_0^{bd}$
 \begin{align}\label{1-D MB--bd-aprox-0}
   &\boxed{R_0^{bd}\approx\frac{\frac{\Delta}{2}}{1+\frac{1}{8K}\frac{\pi^2}{r_e^2}\Delta}}  
 \end{align}
 
%\akif{We need to check dimension of of the quantity $$\frac{\phi C_p}{8K}\frac{\pi^2}{r_e^2}\Delta( h\cdot \Delta_y), \textnormal{in the paper  we assumed that} \ h\cdot \Delta_y=1 $$}

\end{proof} 

\begin{remark}
It is evident that for small enough $\tau$ , small $\lambda \Delta$ and small fraction  $\frac{\phi c_p}{2K}$ the appropriate $R_0^{BD}$ which solves equation \eqref{1-D MB-t-bd-an-1} exists. 
\end{remark}
Using Lagrange mean value result similar to aove  theorem to conclude if reservoir is unbounded then our obtained Peaceman well block radius is the same as in steady state (Classical) case. We state formulation and proof of the the following theorem for future more generic implementation in a view of Landis's multidimensional mean-value theorem \cite{landis-book}.  
 \begin{theorem}
 Well block radius $R_0^{bd}(r_e,\Delta)$ which deliver Peaceman well posedness asymptotically converges $\frac{\Delta}{2}=R_0$  as $r_e\to \infty$for any fixed $\tau$.
 \end{theorem}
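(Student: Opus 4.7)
The plan is to work directly with the transcendental equation \eqref{r-0-bd-geneic} that characterises $R_0^{bd}$, namely
\begin{equation*}
\sin(\lambda_0 R_0^{bd})-\sin(\lambda_0\Delta)+\frac{\lambda_0\Delta}{2}=\sin(\lambda_0 R_0^{bd})\cdot\frac{1}{2K}\cdot\frac{e^{-\lambda_0^2\tau}-1}{\tau},\qquad \lambda_0=\frac{\pi}{2r_e},
\end{equation*}
and pass to the limit $r_e\to\infty$, equivalently $\lambda_0\to 0^+$. Since the sine difference on the left is the natural object to linearise, I would first invoke Lagrange's mean-value theorem on $\sin$ to write
\begin{equation*}
\sin(\lambda_0 R_0^{bd})-\sin(\lambda_0\Delta)=\lambda_0\bigl(R_0^{bd}-\Delta\bigr)\cos(\lambda_0\xi)
\end{equation*}
for some intermediate $\xi$. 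This is precisely the step that the final remark of the excerpt advertises as being of later interest in Landis's multidimensional form.

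After substituting and dividing through by $\lambda_0>0$, the defining equation becomes
\begin{equation*}
\bigl(R_0^{bd}-\Delta\bigr)\cos(\lambda_0\xi)+\frac{\Delta}{2}=\frac{\sin(\lambda_0 R_0^{bd})}{\lambda_0}\cdot\frac{1}{2K}\cdot\frac{e^{-\lambda_0^2\tau}-1}{\tau}.
\end{equation*}
The second step is to control the three factors on the right-hand side as $\lambda_0\to 0$. The hypothesis $R_0^{bd}<\Delta$ built into Lemma \ref{BD-R_0-generic} keeps $\lambda_0 R_0^{bd}$ and $\lambda_0\xi$ uniformly small, so $\cos(\lambda_0\xi)\to 1$ and $\sin(\lambda_0 R_0^{bd})/\lambda_0\to R_0^{bd}$ stays bounded by $\Delta$. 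For fixed $\tau>0$, a one-term Taylor expansion of the exponential gives
\begin{equation*}
\frac{e^{-\lambda_0^2\tau}-1}{\tau}=-\lambda_0^2+O(\lambda_0^4\tau)\;\longrightarrow\;0.
\end{equation*}
Consequently the entire right-hand side vanishes in the limit, and the left-hand side collapses to $(R_0^{bd}-\Delta)+\Delta/2$, forcing $R_0^{bd}\to \Delta/2=R_0$.

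The main obstacle, in my view, is not the formal limit but ensuring that $R_0^{bd}(r_e,\Delta,\tau)$ genuinely exists as a function of $r_e$ staying uniformly bounded away from $0$ and $\Delta$ as $r_e$ grows; without this one cannot legitimately assert the pointwise convergence $\cos(\lambda_0\xi)\to 1$ along the whole sequence. The cleanest remedy is an implicit-function argument at the limit point: define
\begin{equation*}
F(R,\lambda):=\bigl(R-\Delta\bigr)\cos(\lambda\xi(R))+\frac{\Delta}{2}-\frac{\sin(\lambda R)}{\lambda}\cdot\frac{1}{2K}\cdot\frac{e^{-\lambda^2\tau}-1}{\tau},
\end{equation*}
extended by continuity to $\lambda=0$ so that $F(R,0)=R-\Delta/2$. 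Then $\partial_R F(\Delta/2,0)=1\neq 0$, so the implicit function theorem produces a smooth branch $R_0^{bd}(\lambda)$ with $R_0^{bd}(0)=\Delta/2$, giving both existence for small $\lambda_0$ and the desired asymptotic. The approximate closed form \eqref{1-D MB--bd-aprox-0} already derived in Lemma \ref{BD-R_0-generic} serves as an a~posteriori confirmation and in fact quantifies the rate of convergence.
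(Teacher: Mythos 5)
Your proposal follows essentially the same route as the paper's own proof: both apply Lagrange's mean value theorem to the sine difference in \eqref{r-0-bd-geneic}, divide through by $\lambda_0$, and observe that for fixed $\tau$ the right-hand side is $O(\lambda_0)$ as $\lambda_0=\frac{\pi}{2r_e}\to 0$ (since $\frac{e^{-\lambda_0^2\tau}-1}{\lambda_0^2\tau}$ stays bounded), forcing $R_0^{bd}-\frac{\Delta}{2}\to 0$. Your closing implicit-function-theorem step is a genuine addition rather than a divergence --- the paper only asserts existence of $R_0^{bd}$ in a remark after Lemma \ref{BD-R_0-generic} --- and it correctly supplies the existence and uniform boundedness of the branch $R_0^{bd}(\lambda_0)$ that both your limit argument and the paper's implicitly rely on.
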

\begin{proof}
    First observe that from Lagrange mean value theorem follows that 
\begin{equation}\label{r-0-bd-geneic-1}
   \cos\xi\cdot \lambda \left(R_0^{bd}- \Delta\right)+\frac{\lambda\Delta}{2} =\sin(\lambda\cdot R_0^{bd})\cdot\frac{1}{2K}\cdot\frac{e^{-\lambda^2\tau}-1}{\tau},
\end{equation} 
were \begin{equation}\label{xi}
  \lambda R_0^{bd} <\xi<\lambda \Delta, \text{and consequently } \cos{\xi}=1+O(\lambda).  
\end{equation} 
After division by $\lambda$ in \eqref{r-0-bd-geneic-1} we will get 
  \begin{equation}\label{r-0-bd-geneic-2}
   \cos\xi\cdot\left(R_0^{bd}- \Delta\right)+\frac{\Delta}{2} =\lambda\cdot\sin(\lambda\cdot R_0^{bd})\cdot\frac{1}{2K}\cdot\frac{e^{-\lambda^2\tau}-1}{\lambda^2\tau},
\end{equation} 
or assuming as before that $\lambda$ to be such that $\cos\xi\approx 1$ 

\begin{equation}\label{r-0-bd-geneic-3}
R_0^{bd}-\frac{\Delta}{2}=\lambda\cdot\sin(\lambda\cdot R_0^{bd})\cdot\frac{1}{2K}\cdot\frac{e^{-\lambda^2\tau}-1}{\lambda^2\tau}+O(\lambda).
\end{equation} 
Evidently in RHS of \eqref{r-0-bd-geneic-4} term $-1\cdot\frac{e^{-\lambda^2\tau}-1}{\lambda^2\tau}=O(1)$ therefore statement of theorem follows \eqref{lambda-0} and equation \eqref{r-0-bd-geneic-4} below.
\begin{equation}\label{r-0-bd-geneic-4}
   R_0^{bd}-\frac{\Delta}{2}=O(\lambda).
\end{equation} 
 
\end{proof}

\end{subsubsection}

\begin{figure}
    \centering
    \includegraphics[scale=0.5]{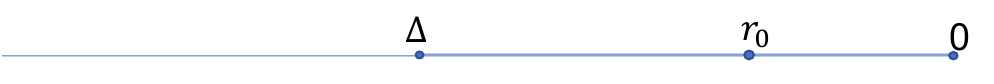}
    \caption{Einstein Mat Balance for 1-D Flow}
    \label{Einstein-MB-1D}
\end{figure}

 \begin{figure}
    \centering
    \includegraphics[scale=0.5]{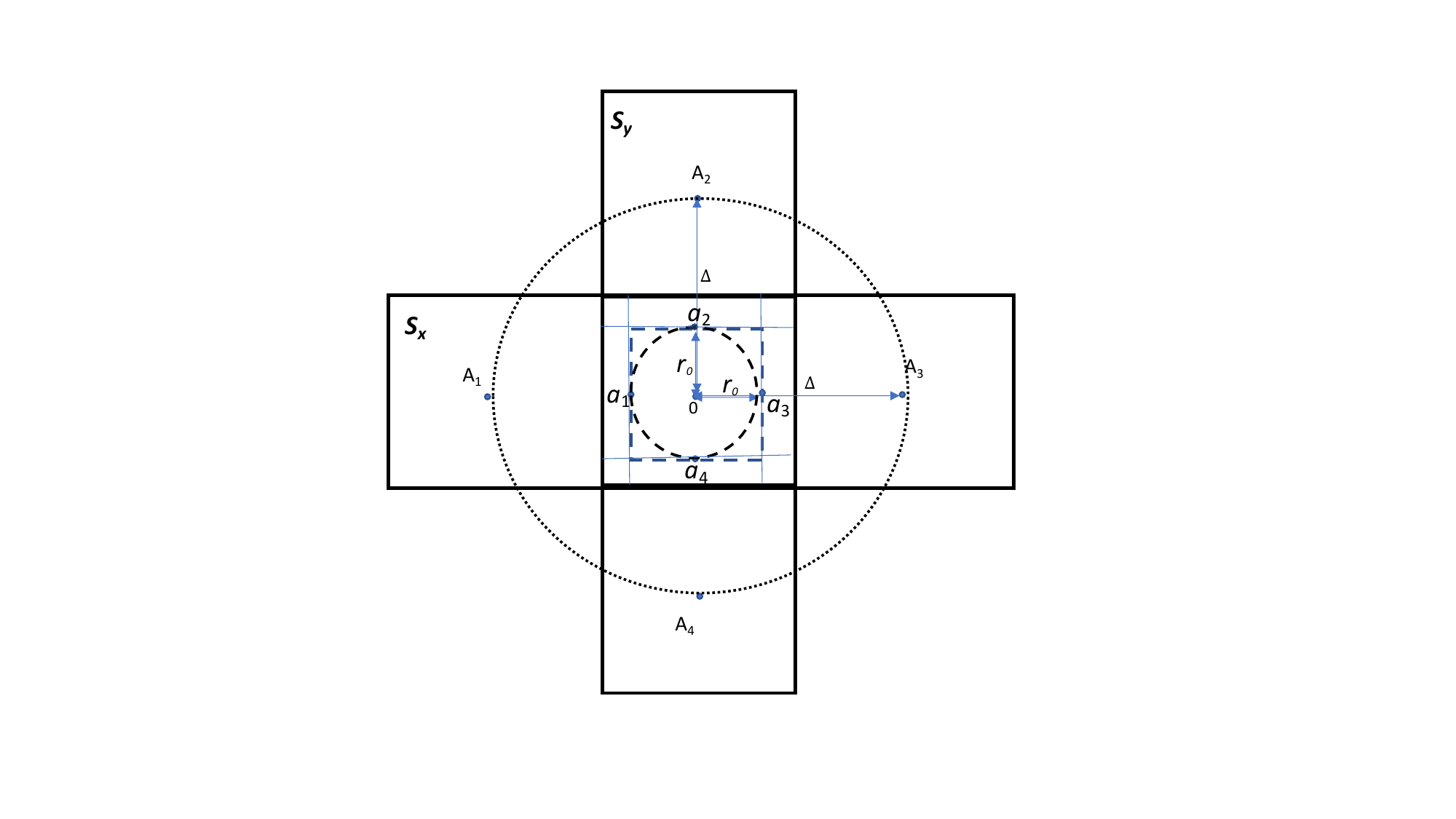}
    \caption{Einstein Mat balance equation on the 5 spots grid, 2-D case}
    \label{Einstein-mat-bal-r}
\end{figure}
\begin{section}{Pseudo Steady-State Material Balance }
Consider flow toward well $\Gamma_w$ in isolated reservoir $V$ of height $h=1$, and $\phi\cdot c_p =1$. Material balance equation for transient flow for slightly compressible fluid in a block with dimensions $\Delta \times \Delta \cdot 1$, volume $V_0=\Delta^2 \cdot 1$ and pressure $p_0$ containing a well (source/sink) with flow rate $q$ (positive for source and negative for sink):
\begin{equation}\label{basic-MB-PSS-Linear case-1}
    -4K\cdot(p_0(s)-p_1(s)) + \frac{q}{h} =  \Delta^2 \cdot 1 \cdot \frac {1}{\tau} \left(p_0(s+\tau)-p_0(s)\right), 
    \end{equation}

Let  the reservoir domain $U$ with volume V, boundary $\partial U =\Gamma_e \cup \Gamma_w $ and thickness $h$.

 \begin{assumption}
  Assume PSS constrain for slightly compressible fluid of compressiblity $c_p$.
 \begin{enumerate}
     \item 
      \begin{equation}\label{PSS-1-consrain}
        \left(p_0(s+\tau)-p_0(s)\right)= q \cdot \frac{\tau}{1 \cdot V }, 
          \end{equation}
          \item
          \begin{equation}
  \text{difference}    \ : \       p_0(s)-p_1(s)=constant \left(s \ \ \text{independent}\right).
          \end{equation}

 \end{enumerate}  
  Under above constrain in \eqref{PSS-1-consrain} in the assumption \eqref{PSS-MB} will take a form 
  \begin{equation}\label{basic-MB-PSS-Linear case}
    4K\cdot(p_0(s)-p_1(s)) = \frac{q}{1}  \cdot \left(1-\frac{\Delta^2}{V}\right),
    \end{equation}
  \underline{where $q$ is given constant in time   rate during given(fixed) time $\tau$, is considered to be the same} 
  
  \underline{for any time step $s$.} 
    
  \end{assumption}
  \begin{remark}
  Note that $p_i(s)$ depend on parameter (time in our application) $s$, but difference in PSS MB does not depend on $s$. It is remarkable difference in compare to Steady State regime.
  \end{remark}

Consider transient  2-D radial flow in the isolated the annual domain $U$for slightly compressible fluid towards well $\Gamma_w$ with given production rate  and non-flow condition on     the  radius $\Gamma_e$:

\begin{align}
&K\cdot \Delta p =1\cdot\frac{\partial p}{\partial t} \ \text{in} \  U=U(0,r_w,r_e) ; \label{ss-pde} \\
& K\cdot\frac{\partial p}{\partial \nu}=0\label{ext-BC} \ \text{on} \ \Gamma_e, \ \  r=r_e \ ;\\
&K\cdot\int_{\Gamma_w}\frac{\partial p}{\partial \nu}ds=-\tilde{q} \ \text{on} \ \Gamma_w \  , \  r=r_w .\label{well-BC} 
\end{align}
Here $$U(0,r_w,r_e)=\{x: r_w<|x|<r_e \} \ , \Gamma_w=\{x:|x|=r_w\ , \Gamma_e=\{x:|x|=r_e\} \, \ x=(x_1,x_2) ,$$ and 
$$\tilde{q}=\dfrac{q}{1}, V=1\cdot|U| \ , \  K=\frac{k}{\mu} , \ \frac{\partial p}{\partial \nu} \textnormal{ external derivative in co-normal direction  }  . $$

For generic case in order to deal with problem it is natural to consider the mixed  boundary value problem for elliptic equation which is well-defined  from mathematical point of view the following approach, and can be generalised for different scenarios.

In order $R_0$ to be time independent   we will split approach for IBVP. Namely consider   PSS solution of the above problem \eqref{ss-pde}-\eqref{well-BC}   defined as follows:
\begin{equation}
p_{pss}(x,t)=w(x)+At,    
\end{equation}
In above
\begin{equation}\label{A-definition}
  A=\frac{\tilde{q}}{1 \cdot |U|},  
\end{equation}

and $w(x)$ is solution of steady state problem:
\begin{align}
    \nabla \cdot \left(K  \nabla w(x)\right)= \frac{\tilde{q}}{|U|} \ \textnormal {in} \ U , \label{w-equation}\\
    w(x)=0 \ \text{on} \ \Gamma_w \ , \label{w-cond-well}\\
   K \frac{\partial w}{\partial \vec{\nu}}=0 \ \text{on} \ \Gamma_e. \ \label{ext-bound-cond}
\end{align}

In radial axial-symmetrical(radial flow) cases pseudo-steady state  $p_{pss}(r,t)$ will take a form 
\begin{equation}\label{pss-radial}
    p_{pss}(r,t)=w(r)+At
\end{equation}
Using representation for $ p_{pss}(r,t)$  It is not difficult to prove 
\begin{theorem}\label{PSS-MB}
In order  $\left[R_0^{PSS}(r_e,\Delta)\right]$ - Peaceman radius for PSS problem it is sufficient to find $\left[R_0^{PSS}(r_e,\Delta)\right]$  s.t.
\begin{equation}
4K\cdot\left(w(\Delta)-w(\left[R_0^{pss}(r_e,\Delta)\right])\right)=-\frac{ q}{1 \cdot V}\left( V-\Delta^2\right). \  
\end{equation}

\end{theorem}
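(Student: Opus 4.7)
The plan is to take the PSS ansatz $p_{pss}(x,t)=w(x)+At$ introduced just before the theorem, with $A=\tilde q/(1\cdot|U|)$ and $w$ solving the auxiliary steady-state problem \eqref{w-equation}--\eqref{ext-bound-cond}, and simply feed this candidate into the 2-D PSS material balance \eqref{basic-MB-PSS-Linear case-1} evaluated at the two abscissae of interest, $x=\Delta$ and $x=R_0^{pss}$. Concretely, I set $p_0(s)=p_{pss}(R_0^{pss},s)=w(R_0^{pss})+As$ and $p_1(s)=p_{pss}(\Delta,s)=w(\Delta)+As$. This is the natural transcription of the definition of a Peaceman radius to the 2-D radial, time-dependent case.

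Next I would verify that the PSS structural constraints behind \eqref{basic-MB-PSS-Linear case} are automatically fulfilled by this ansatz. Indeed, the spatial difference
\begin{equation*}
p_0(s)-p_1(s)=w(R_0^{pss})-w(\Delta)
\end{equation*}
is independent of $s$, and the temporal increment
\begin{equation*}
p_0(s+\tau)-p_0(s)=A\tau=\frac{\tilde q}{1\cdot V}\,\tau
\end{equation*}
matches exactly the PSS compression hypothesis \eqref{PSS-1-consrain}. The value of $A$ is forced by the solvability (compatibility) condition for the Neumann problem \eqref{w-equation}--\eqref{ext-bound-cond}: integrating the equation for $w$ over $U$ and using the no-flow condition on $\Gamma_e$ together with the prescribed flux through $\Gamma_w$ pins down $A=\tilde q/V$.

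Plugging these two identities into \eqref{basic-MB-PSS-Linear case-1} with $h=1$, the $As$ terms cancel on the left-hand side, while the right-hand side collapses to $\Delta^{2}\cdot A=\Delta^{2}\,\tilde q/V$. Rearranging yields
\begin{equation*}
-4K\bigl(w(R_0^{pss})-w(\Delta)\bigr)+q=\frac{q\,\Delta^{2}}{V},
\end{equation*}
which is exactly the asserted sufficient condition
\begin{equation*}
4K\bigl(w(\Delta)-w(R_0^{pss}(r_e,\Delta))\bigr)=-\frac{q}{1\cdot V}\bigl(V-\Delta^{2}\bigr).
\end{equation*}
Thus any $R_0^{pss}$ solving this single scalar equation makes the PSS analytical solution compatible with the discrete MB equation, which is the definition of Peaceman well-posedness in this regime.

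The proof itself is a short computation; the only genuine subtlety, and the point I would emphasize, is the choice $A=\tilde q/V$. It is not an extra assumption but the unique value compatible with the overdetermined steady-state problem \eqref{w-equation}--\eqref{ext-bound-cond}, and it is precisely this choice that cancels the time dependence in the MB equation and makes $R_0^{pss}$ time-independent, matching the boxed claim in the abstract. Existence and uniqueness of $R_0^{pss}\in(0,\Delta)$ follow (as in the 1-D case of Theorem \ref{R-0-pss-peaceman}) from monotonicity of $w$ in $r$, but the theorem as stated only claims sufficiency, so this monotonicity argument can be deferred.
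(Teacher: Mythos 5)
Your proof is correct and follows exactly the route the paper intends: the paper states Theorem \ref{PSS-MB} as an immediate consequence of the representation $p_{pss}(r,t)=w(r)+At$ (``it is not difficult to prove''), and its later use in the proof of Theorem \ref{PSS-Radius}, equation \eqref{PSS-an_MB}, confirms that the intended argument is precisely your substitution of $p_0(s)=w(R_0^{pss})+As$, $p_1(s)=w(\Delta)+As$ into the material balance \eqref{basic-MB-PSS-Linear case-1}, with the $As$ terms cancelling in the spatial difference and the increment collapsing to $A\tau=q\tau/V$ in agreement with \eqref{PSS-1-consrain}. You have merely written out the details the paper omits, including the correct observation that $A=\tilde q/V$ is forced by integrating \eqref{w-equation} over $U$ against the flux conditions, so nothing further is needed.
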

From explicit form of the solution  of the problem\eqref{w-equation}-\eqref{ext-bound-cond} one can
find such $ \left[R_0^{pss}(r_e,\Delta)\right].$

We will use another approach based on the formulation of the problem in term of velocity field. This velocity framework will help in future to work on nonlinear flows and some cases  explicitly obtain associated  Peaceman well block radius. First let us start with 
\begin{remark}
  In generic set   $U$ be domain with  split boundary $\partial U=\Gamma_e\cup \Gamma_w,$ were $$\Gamma_e\cap \Gamma_w=\emptyset,$$ and $\Gamma_e, \ ,\Gamma_w$ are compacts.
\end{remark}

  we will say that velocity field $v(x)$ has PSS profile if following assumption holds 
    \begin{assumption}\label{assumpt-PSS-SC-fluid} 
  We will say that velocity field  subject for PSS regime of flows if velocity is time-independent and the   solves the following BVP
    \begin{align}
      \nabla \cdot \vec{v}=C \ \textnormal{in} \ U  \label{cont-eq} \\
     \vec{v}\cdot \vec{\nu}=0 \ \text{on } \Gamma_e \label{no-flow-BC}.
    \end{align}
      Note that due to divergence theorem constant $C$ form
      \eqref{cont-eq} satisfies
    \begin{equation}\label{div-th}
      \int_{\Gamma_w}  \vec{v}\cdot\vec{\nu}d s=\tilde{q}=C\cdot|U|  
    \end{equation}
    \end{assumption}
\begin{remark}
Note that    in our original research  PSS regimes\cite{ibragim-Prod-Ind} was defined in term of pressure function. Namely we assumed that flow is PSS if $\frac{\partial p }{\partial t}=constant,$ and no flow condition on exterior boundary. In our intended application both definitions are equivalent to each other.  
\end{remark}
\begin{theorem}\label{PSS-Radius}
 Exists solution of Peaceman problem for time dependant PSS regime of the production, and corresponding $R_0^{PSS}(r_e,\Delta)$    defined by the equation 
 \begin{equation}\label{R0-PSS-exact-3}
    -\pi+\frac{\left[R_0^{PSS}(r_e,\Delta)\right]^2}{r_e^2}+\pi\frac{ r_w^2}{r_e^2}=-2\cdot\left(\ln\frac{\Delta}{\left[R_0^{PSS}(r_e,\Delta)\right]}\right).
     \end{equation}
     Moreover 
     \begin{equation}
       \lim_{r_e\to \infty}  \left[R_0^{PSS}(r_e,\Delta)\right]=R_0^{SS}=R_{Peaceman}
     \end{equation}
\end{theorem}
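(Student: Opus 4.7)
The plan is to reduce the elliptic boundary value problem \eqref{w-equation}--\eqref{ext-bound-cond} for $w(x)$ to a radial ODE, solve it explicitly, substitute the closed form into the scalar MB identity provided by Theorem \ref{PSS-MB}, and then analyze the resulting transcendental equation for $R_0^{PSS}$.

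First I would exploit axial symmetry: since $|U|=\pi(r_e^2-r_w^2)$ and the source term $\tilde q/|U|$ is constant, the PDE reduces to
\begin{equation*}
\frac{1}{r}\frac{d}{dr}\!\left(rK\frac{dw}{dr}\right)=\frac{\tilde q}{|U|},\qquad w(r_w)=0,\qquad \frac{dw}{dr}(r_e)=0.
\end{equation*}
Integrating once and using the no-flow condition at $r_e$ gives $Kr\,w'(r)=\tfrac{\tilde q}{2|U|}(r^2-r_e^2)$; a second integration with $w(r_w)=0$ yields
\begin{equation*}
Kw(r)=\frac{\tilde q}{2|U|}\!\left(\frac{r^2-r_w^2}{2}-r_e^{\,2}\ln\frac{r}{r_w}\right).
\end{equation*}
This is the radial counterpart of the 1-D parabolic $w(x)=Ax^2+Bx$ of \eqref{p_an}.

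Next I would substitute $w(\Delta)$ and $w(R_0^{PSS})$ into the sewing identity of Theorem \ref{PSS-MB},
\begin{equation*}
4K\bigl(w(\Delta)-w(R_0^{PSS})\bigr)=-\frac{q}{V}\bigl(V-\Delta^2\bigr).
\end{equation*}
The $r_w^2$ constants cancel, and after dividing through by $\tilde q/|U|=\tilde q/(\pi(r_e^2-r_w^2))$ and then by $r_e^2$, the algebra collapses to exactly
\begin{equation*}
-\pi+\frac{(R_0^{PSS})^2}{r_e^{\,2}}+\pi\frac{r_w^2}{r_e^{\,2}}=-2\ln\frac{\Delta}{R_0^{PSS}},
\end{equation*}
which is \eqref{R0-PSS-exact-3}. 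Existence of a root in $(0,r_e)$ I would establish by an intermediate-value argument applied to $F(\rho)=\rho^2/r_e^2+2\ln(\Delta/\rho)$: computing $F'(\rho)=2\rho/r_e^2-2/\rho$ shows $F$ is strictly decreasing on $(0,r_e)$, with $F(\rho)\to+\infty$ as $\rho\downarrow 0$ and $F(r_e)=1+2\ln(\Delta/r_e)<\pi(1-r_w^2/r_e^2)$ once $r_e$ is large enough, so a unique positive solution exists.

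For the limit $r_e\to\infty$, the main obstacle (and the most interesting step) is to show that $R_0^{PSS}(r_e,\Delta)$ remains uniformly bounded away from $0$ and $\infty$, so that the term $(R_0^{PSS})^2/r_e^2$ genuinely vanishes in the limit. Using the monotonicity of $F$ above, I would trap $R_0^{PSS}$ between two explicit $r_e$-independent values by comparing $F(\rho)=\pi-\pi r_w^2/r_e^2$ against $F(\rho)=\pi$ and $F(\rho)=\pi-\varepsilon$; both comparison equations reduce to $2\ln(\Delta/\rho)\approx\pi$ plus vanishing corrections, which pins $R_0^{PSS}$ to a neighborhood of $\Delta e^{-\pi/2}$. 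Passing to the limit then gives $-\pi=-2\ln(\Delta/R_0^{SS})$, hence $R_0^{SS}=\Delta e^{-\pi/2}=R_{Peaceman}$, which is the classical Peaceman radius recovered in the steady-state review \cite{izia}. An implicit-differentiation argument analogous to Theorem \ref{mon-pss-1-D} would, if desired, also give monotone convergence in $r_e$.
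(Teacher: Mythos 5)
Your proposal is correct, and its computational core coincides with the paper's: solve the radial PSS profile explicitly, feed $w(\Delta)-w(R_0^{PSS})$ into the sewing identity of Theorem \ref{PSS-MB}, observe that the $r_w$-dependent constants cancel, and divide by $r_e^2$ to reach \eqref{R0-PSS-exact-3}; your closed form $Kw(r)=\frac{\tilde q}{2|U|}\left(\frac{r^2-r_w^2}{2}-r_e^2\ln\frac{r}{r_w}\right)$ is exactly the paper's \eqref{PSS-an-4-MB} with its constants $C_1=\frac{C}{2}r_e^2$ and $C_2$ inserted. The organizational difference is that you integrate the second-order pressure BVP \eqref{w-equation}--\eqref{ext-bound-cond} twice, while the paper deliberately routes through the velocity field: it first solves the first-order system \eqref{cont-eq-r}--\eqref{no-flow-BC-r} for $v(r)$ and then recovers $w$ from Darcy's law, a framework the authors motivate as reusable for non-Darcy/nonlinear flows where the pressure equation is not available in divergence-integrable form. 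What your route buys is rigor where the paper is silent: the paper derives \eqref{R0-PSS-exact-3} and then simply asserts existence and that the limit statement ``follows''; you actually prove existence and uniqueness of the root via monotonicity of $F(\rho)=\rho^2/r_e^2+2\ln(\Delta/\rho)$ on $(0,r_e)$ together with the sign change between $\rho\downarrow 0$ and $\rho=r_e$, and you justify the passage to the limit by trapping $R_0^{PSS}$ between $r_e$-independent bounds near $\Delta e^{-\pi/2}$ so that $(R_0^{PSS})^2/r_e^2\to 0$ legitimately, identifying $R_{Peaceman}=\Delta e^{-\pi/2}$ explicitly. Your argument is thus a strict strengthening of the paper's proof; the only content of the paper's approach absent from yours is the velocity-field formalism itself, which matters for the authors' stated future generalizations rather than for this theorem.
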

\begin{proof}

   In general   vector field   solution of the BVP \eqref{cont-eq}-\eqref{no-flow-BC}  is not unique, but in radial case  is well defined :
 \begin{align}
     -\frac{1}{r}(r v(r))_r=C=\frac{\tilde{q}}{|U|} \ , \label{cont-eq-r}\\
     \vec{v}\cdot \vec{r}\big|_{r=r_e}=0 \ \label{no-flow-BC-r}
     \end{align}
Then for $r_w \leq r\leq r_e$:

 \begin{equation}\label{v-pss-an}
     v(r)=-\frac{C}{2}\cdot r +\frac{C_1}{r}.
 \end{equation}
 In above due to BC constants can be selected as:
 \begin{equation}\label{const-in-v-pss}
   C=\frac{q}{\pi(r_e^2-r_w^2)}=\frac{q}{|U|}, \ \text{and} \ C_1=\frac{C}{2}r_e^2.   
\end{equation}
 Then  due to Darcy equation  -  PSS solution  $w(r)=-\frac{\mu}{k}\int v(r) dr$ and consequently 
\begin{equation}\label{PSS-an-4-MB}
     w(r)=-K^{-1}\left[-\frac{C}{4}\cdot r^2 +C_1\ln r+C_2\right].
\end{equation}
 In above  constant $C_2$  have chosen s.t.  $w(r)|_{r=r_w}=0$ and therefore :
 \begin{equation}
     C_2=\left[ \frac{C}{4}\cdot r_w^2-C_1\ln r_w\right].
 \end{equation}
 
 Consequently expression for pressure function to choose $R_0^{PSS}(r_e,\Delta)$ using Theorem \ref{PSS-MB} in case of PSS  regime.

 \begin{equation}\label{PSS-an_MB}
      -\tilde{q}  \cdot \left(1-\frac{\Delta^2}{V}\right)=4K\cdot(p_1-p_0)=4K\cdot[w(\Delta)-w(R_0)]
 \end{equation}
% We will assume that $h=1$
 Then due to \eqref{PSS-an-4-MB} and \eqref{const-in-v-pss} one has 

 \begin{align}\label{PSS-anal-MB-anul-1}
     &-\tilde{q}  \cdot \left(1-\frac{\Delta^2}{|U|}\right)=4\cdot K K^{-1}\cdot\\ \nonumber
     &\cdot\left[\left(C\cdot\frac{\Delta^2}{4}-C_1\ln (\Delta)-C_2\right)-\left(C\cdot\frac{\left[R_0^{PSS}(r_e,\Delta)\right]^2}{4}-C_1\ln (\left[R_0^{PSS}(r_e,\Delta)\right])-C_2\right)\right] \\
     &=\left[C\cdot\left(\Delta^2-R_0^2\right)-4\cdot\left(C_1\ln \frac{\Delta}{\left[R_0^{PSS}(r_e,\Delta)\right]}\right)\right]=\nonumber\\
     &C\cdot\left[\left(\Delta^2-\left[R_0^{PSS}(r_e,\Delta)\right]^2\right)-2\cdot\left(r_e^2\ln \frac{\Delta}{\left[R_0^{PSS}(r_e,\Delta)\right]}\right)\right].\nonumber
 \end{align}
 After simplification one can get
 \begin{equation}\label{R0-PSS-exact-1}
    \Delta^2-|U|=\left(\Delta^2-\left[R_0^{PSS}(r_e,\Delta)\right]^2\right)-2\cdot\left(r_e^2\ln \frac{\Delta}{\left[R_0^{PSS}(r_e,\Delta)\right]}\right),
 \end{equation}
 or
 \begin{equation}\label{R0-PSS-exact-2}
  \left[R_0^{PSS}(r_e,\Delta)\right]^2-\pi\left(r_e^2-r_w^2\right)  =\left[R_0^{PSS}(r_e,\Delta)\right]^2-|U|=-2\cdot\left(r_e^2\ln \frac{\Delta}{\left[R_0^{PSS}(r_e,\Delta)\right]}\right).
 \end{equation}
 From later main equation \eqref{R0-PSS-exact-3} 
  follows.
 %\akif{Obviously exists $R_0$, depending on $r_e$ and $\Delta$, which solves above equation. Above equation should be checked.}
 Second statement of the theorem follows follows  from   \eqref{R0-PSS-exact-3}.

\end{proof}

From Theorem \ref{PSS-Radius} formula \eqref{R0-PSS-exact-3}  follows monotonicity property of Peaceman PSS radius $R_0^{PSS}(r_e,\Delta)$ w.r.t. external radius $r_e$ 
\begin{theorem}\label{PSS-mon-radial-case}
    Under condition of applicability of our formulae  for PSS problem - $r_e\geq R_0^{PSS}(r_e,\Delta)$ 
    function $R_0^{PSS}(r_e,\Delta)$-monotonically decreasing.
\end{theorem}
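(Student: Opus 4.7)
The plan is to apply the implicit function theorem directly to equation \eqref{R0-PSS-exact-3}, which defines $R_0^{PSS}(r_e,\Delta)$ implicitly as a function of $r_e$ (treating $\Delta$ and $r_w$ as fixed parameters). Writing $R = R_0^{PSS}(r_e,\Delta)$ for brevity, I would rearrange \eqref{R0-PSS-exact-3} into the zero-set form
\begin{equation}
F(r_e, R) \;\eqdef\; \frac{R^2 + \pi r_w^2}{r_e^2} \;-\; \pi \;+\; 2\ln\!\frac{\Delta}{R} \;=\; 0.
\end{equation}

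First I would compute the two partial derivatives:
\begin{align}
\frac{\partial F}{\partial r_e} &= -\frac{2(R^2 + \pi r_w^2)}{r_e^3},\\
\frac{\partial F}{\partial R} &= \frac{2R}{r_e^2} - \frac{2}{R} \;=\; \frac{2(R^2 - r_e^2)}{R\, r_e^2}.
\end{align}
Under the applicability hypothesis $r_e \geq R$, the quantity $R^2 - r_e^2$ is strictly negative whenever $r_e > R$, so $\partial_R F \neq 0$ and the implicit function theorem yields a well-defined, differentiable branch $R(r_e)$. This is also the step that checks that the equation really does determine $R$ locally — I would note that the edge case $r_e = R$ corresponds exactly to the degenerate configuration excluded by the hypothesis, so the argument is clean on the interior.

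Next I would assemble the derivative:
\begin{equation}
\frac{dR}{dr_e} \;=\; -\frac{\partial F/\partial r_e}{\partial F/\partial R} \;=\; \frac{R\,(R^2 + \pi r_w^2)}{r_e\,(R^2 - r_e^2)}.
\end{equation}
The numerator is manifestly positive (since $R>0$ and $r_w^2 \geq 0$), while the denominator is negative by the hypothesis $r_e > R$. Hence $dR/dr_e < 0$, which is precisely the claimed monotonic decrease.

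I do not anticipate a serious obstacle: the proof is an implicit-differentiation calculation, and all sign questions reduce to the single inequality $r_e > R$ furnished by the hypothesis. The only mildly delicate point is to record that the implicit function theorem applies (i.e.\ $\partial_R F \neq 0$) on the admissible range, and to observe that the formula \eqref{R0-PSS-exact-3} has been derived under the same standing assumptions used in Theorem \ref{PSS-Radius}, so differentiating it in $r_e$ is legitimate. A concluding remark could tie this back to the limiting behavior $\lim_{r_e\to\infty} R_0^{PSS}(r_e,\Delta) = R_0^{SS}$ already established: monotone decrease toward that steady-state Peaceman radius is the natural companion statement.
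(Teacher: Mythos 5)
Your proposal is correct and follows essentially the same route as the paper: both implicitly differentiate equation \eqref{R0-PSS-exact-3} in $r_e$ and conclude negativity of $dR_0^{PSS}/dr_e$ from the sign of the factor $r_e^2-\left(R_0^{PSS}\right)^2>0$ guaranteed by the applicability hypothesis. Your only addition is the explicit invocation of the implicit function theorem (checking $\partial_R F\neq 0$), which the paper's proof assumes tacitly when it divides by that bracket; this is a minor tightening, not a different argument.
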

\begin{proof}
  Taking derivative of left and right hand side of the equation \eqref{R0-PSS-exact-3} one can get 
  \begin{equation}\label{expl-der-pss-rad}
      2\frac{\left[r_e^2-\left(R_0^{PSS}(r_e,\Delta)\right)^2\right]}{R_0^{PSS}(r_e,\Delta)\cdot r_e^2}\cdot\frac{d}{d r_e}R_0^{PSS}(r_e,\Delta)=-2\left[\frac{\left(R_0^{PSS}(r_e,\Delta)\right)^2}{r_e^3}+\pi\frac{r_w^2}{r_e^3}\right]<0
  \end{equation}
  But in a view of applicability of the framework
  bracket $\left[r_e^2-\left(R_0^{PSS}(r_e,\Delta)^2\right)\right]>0$, therefor statement  of the theorem follows from   \eqref{expl-der-pss-rad}.
\end{proof}
\begin{remark}
    It is evident that as $r_e>>R_0$  one can get good approximation using Peaceman well block radius  
   
\begin{equation}\label{R0-PSS-approx}
    \frac{\pi}{2}\approx\left(\ln \frac{\Delta}{R_0}\right).
 \end{equation}
\end{remark}
\end{section}

%_______________________________________

\begin{section}{Linear Boundary Dominate Material Balance }
Linear Material Balance for slightly compressible fluid the same as before, and  flow toward well $\Gamma_w$ in isolated reservoir $V$ of height $h=1$, and $\phi\cdot c_p =1$.

Assume boundary dominated (BD) constrain for slightly compressible fluid the same as for PSS. Namely 
\begin{equation}\label{basic-MB-PSS-Linear case-1}
    -4K\cdot(p_0(s)-p_1(s)) + \frac{q(s)}{h} =  1\cdot \frac{V_0}{1}  \cdot \frac {1}{\tau} \left(p_0(s+\tau)-p_0(s)\right), 
    \end{equation}
   Once again let $V$ be the volume of the reservoir domain $U$ with boundary $\partial U =\Gamma_e \cup \Gamma_w $ .
 In order Peaceman radius for boundary dominated regime to be time independent assume the following
  \begin{assumption}\label{BD-assump for MB eq} 
Assume that:

   \underline{A-1}.
   
\begin{equation}\label{q/p1}
       \frac{q(s)}{p_1(s)}=C_1
\end{equation} 
   
Constant $C_1$ is $s$ independent

  \underline{A-2}.
  
   \begin{equation}\label{p0/p1}
       \frac{p_0(s)}{p_1(s)}=C_2
    \end{equation} 
   
  Constant $C_2$ is $s$ independent 
  
   \underline{A-3}. 
   
   \begin{equation}\label{p0s+/p0s}
      \tau^{-1}\cdot\left( \frac{p_0(s+\tau)}{\cdot p_0(s)}-1\right)\approx C_3  \ ,\ \text{for} \ \ \tau<<1.
   \end{equation} 
   
 Constant $C_3$ is $s$ and $\tau$ independent. 
 \end{assumption}  
 BD IBVP is defined as 
\begin{align}
&K\cdot \Delta p =c_p\phi\frac{\partial p}{\partial t} \ \text{in} \  U=U(0,r_e,r_w)=B(0,r_e)\setminus  B(0,r_w) ; \label{ss-pde} \\
& K\cdot\frac{\partial p}{\partial \nu}=0\label{ext-BC} \ \text{on} \ \Gamma_e, \ \  r=r_e \ ;\\
&p(x)=p_w \ \text{on} \ \Gamma_w \  , \  r=r_w .\label{well-BC} 
\end{align}
For simplicity we assume that $p_w=0$.

Once more assuming that radial flow towards well of  radius $r_w$, let  base solution of the problem above to be in the form of 
\begin{equation}\label{basic-anal-bdd-sol}
    p(x,t)=u_0(x,t)=e^{-\lambda_0\cdot t\frac{\cdot K}{1}}\varphi_0(x).
\end{equation}

Here $\varphi_0(x)$ first eigenfunction  and first $\lambda_0$ eigenvalue of the problem in the domain $U,$ with split boundaries $\partial U=\Gamma_w\cup \Gamma_e$.
\begin{align}
& -\Delta \varphi_0(x)=\lambda_0 \varphi_0(x) \ \text{in } \  U  \label{eigen-eq} ;\\
& \varphi_0(x)=0 \ \text{on} \ \Gamma_w \  ,\left( \text{in radial case when }  r=r_w \right)\label{eigen-well-cond} ; \\
& \frac{\partial \varphi_0(x)}{\partial \nu}=0\ \text{on} \ \Gamma_e. \ \left(\text{in radial case when }\  r=r_e \right)\label{eigen-ext-bound-cond}
\end{align}
\begin{remark}
    Motivation to consider this type of solution comes from  paper\cite{ibragim-Prod-Ind}, in which we proved that corresponding productivity index is time independent.  
\end{remark}

First let us  check   constrains in the \ref{BD-assump for MB eq} w.r.t. analytical solution \eqref{basic-anal-bdd-sol}

Indeed it is not difficult to see that all three conditions : $A_1$, $A_2$, and $A_3$ in Assumption \ref{BD-assump for MB eq} are satisfied  with 
\begin{align}
& C_1(\Delta_,R_0)=\frac{\varphi_0(\Delta)}{\varphi_0(R_0)} \
 C_2=\Lambda\frac{\int \varphi_0 dx}{\varphi_0(R_0)} \
C_3=\phi\cdot V_0 \cdot c_p \cdot \Lambda 
\end{align}
Finally assuming that $c_p=1$for given $\Delta$ one has equation for $R_0^{bd}$ for  
\begin{equation}\label{BD-eq-for-R0}
  \frac{\varphi_0(\Delta)}{\varphi_0(R_0)}+ \lambda_0\frac{\int \varphi_0 dx}{\varphi_0(R_0)}=\lambda_0\cdot V_0 ,\ \text{or} \ \frac{\varphi_0(\Delta)}{ \Delta^2}+ \frac{1}{c_p}\cdot\frac{\int_{\Gamma_w} \frac{\partial \varphi_0}{\partial \nu} ds}{\Delta^2}=\lambda_0\varphi_0(R_0).
\end{equation}
Function $\varphi_0(r)$ satisfying conditions \eqref{eigen-well-cond}-\eqref{eigen-ext-bound-cond} is a solution of the  Sturm-Liouville problem for the Helmholtz equation in an annual  domain with Dirichlet and Neumann conditions:
\begin{align}
  \frac{1}{r}\frac{\partial}{\partial r}\left( r \frac{\partial \varphi_0(r)}{\partial r } \right)+\lambda_0\varphi_0(r)=0, \ r_w<r<r_e\label{eq-SL} \\ 
     \varphi_0(r_w)=0, \frac{\partial\varphi_0(r)}{\partial{r}}\Big|_{r=r_e}=0 .\label{eq-SL-cond}
\end{align}

We are interesting in the non-negative solution of the above boundary value problem  which has the form (see \cite{Tich})

  \begin{equation}\label{varphi-r-sol}
    \varphi_0(r)={J_0(\sqrt{\lambda_0}{r_w})}{N_0(\sqrt{\lambda_0}{r})}-{N_0(\sqrt{\lambda_0}{r_w})}{J_0(\sqrt{\lambda_0}{r})},
\end{equation}
were   $\lambda_0$ to be first eigenvalue , which is solution of the transcendent equation: 

\begin{equation}\label{determ_0}
    {N_0(\sqrt{\lambda_0}{r_w})}{J'_0(\sqrt{\lambda_0}{r_e})}-{N'_0(\sqrt{\lambda_0}{r_e})}{J_0(\sqrt{\lambda_0}{r_w})}=0
\end{equation}

Consequently, the solution of the problem \eqref{ss-pde}-\eqref{well-BC} has a form
\begin{equation}\label{problem-sol}
    u_0(r,t)=e^{-\lambda_0\cdot t\frac{\cdot K}{1}}[{J_0(\sqrt{\lambda_0}{r_w})}{N_0(\sqrt{\lambda_0}{r})}-{N_0(\sqrt{\lambda_0}{r_w})}{J_0(\sqrt{\lambda_0}{r})}]
\end{equation}

One can directly verify all constrains in Assumption \ref{BD-assump for MB eq} by letting 
\begin{equation}
p_1(s)=u_0(\Delta ,s) \ , \ p_0(s)=u_0(R_0,s) \, \ p_0(s+\tau)=u_0(R_0,s+\tau) \, \  q(s)=-2\pi r_w\cdot K \frac{\partial u_0(r,s)}{\partial r}\big|_{r=r_w}
\end{equation}

In this article we will not provide proof of the existence of  Peaceman well block radius for boundary dominated regime in radial case, and investigate its property depending on the parameters of the problem. Instead of that we will  state the statement in form of remark leaving details for upcoming publication.   
 \vspace{-0.3 cm}
\begin{remark}
   Substitute  $p_0(s)$, $p_1(s)$ ,  $p_0(s+\tau)$, and $q(s)$ into material balance equation \eqref{basic-MB-PSS-Linear case-1}.
   Then we will get transcended equation for $R_0^{BD}(r_e,\Delta)$ of the form 
\begin{align}
    \varphi_0(R_0^{BD}(r_e,\Delta))-\varphi_0(\Delta)=-\frac{2}{\pi}\ln{\frac{\Delta}{R_0^{BD}(r_e,\Delta)}}
\end{align}

Here $\varphi_0(r)$ is the first  eigenfunction of the problem \eqref{eq-SL}-\eqref{eq-SL-cond}, which defined by equation \eqref{varphi-r-sol}.
\end{remark}
\end{section}

\bibliographystyle{plain}

\begin{thebibliography}{200}

\bibitem{landis-book}
Landis, E.M.
\newblock \emph{Second Order Equations of Elliptic and Parabolic Type},
Moscow, Nauka, 1971 (Russian); English transl.: Transl. of Mathematical Monograph, {\bf 171}, AMS, Providence, RI, 1998.

%\bibitem{landkof} 
%Landkof, N.S. 
%\newblock \emph{ Foundations of Modern Potential Theory},
 %Moscow, FML, 1966 (Russian); English transl.: Grundlehren der math. Wissenschaften, {\bf 180}, Springer-Verlag, New York-Heidelberg, 1972.
 
%\bibitem{Dake-book} 
%Dake L.P.
 %\newblock\emph{ Fundamentals of Reservoir Engineering},
%Volume 8, 1st Edition,1985  ISBN: 9780444418302, eBook ISBN: 9780080568980, Imprint: Elsevier Science.

%\bibitem{ibragim-gen-Forcheimer-1}
%Aulisa, E, Bloshanskaya,L, Hoang, L,Ibragimov, A,
%\newblock\emph{Analysis of generalized Forchheimer flows of compressible fluids in porous media},  J. Math. Phys. 50, 103102 (2009).

\bibitem{Einstein56} Einstein A. \newblock\emph{Über die von der molekularkinetischen Theorie der Wärme geforderte Bewegung von in ruhenden Flüssigkeiten suspendierten Teilchen}, Annalen der Physik, ser.4 vol. 17 (1905);
cited by On the Movement of Small Particles
Suspended in a Stationary Liquid
Kinetic Theory of Heat in Investigations on the Theory of the Brownian Movement. New York, USA: Dover Publications, Inc.1956.

\bibitem{ibragim-Prod-Ind}
Ibragimov, A., Khalmanova, D., Valkó, P. P., and Walton, J. R.,  \newblock\emph{ On a mathematical model of the productivity index of a well from reservoir engineering},SIAM J. Appl. Math. 65, 1952 (2005).

%\bibitem{Ibragim-Pre-Darcy}
%Ibragimov, A., Soliman, M., Bloshanskaya, L., Siddiqui, R.
%\newblock\emph{Productivity Index for Darcy and pre-/post-Darcy Flow (Analytical Approach)},
% Transport in Porous Media, (2017).
 
 %\bibitem{ibragim-prod-ind-PSS}
 %E Aulisa, A Ibragimov, P Valko, J Walton
 %\newblock\emph{
 %Mathematical framework of the well productivity index for fast Forchheimer (non-Darcy) flows in porous media}
%Mathematical Models and Methods in Applied Sciences 19 (08), 1241-1275.(2009).
 
%\bibitem{ibragim-prod-ind-gas}
%Aulisa, E., Bloshanskaya,L., Ibragimov, A.
 %\newblock\emph{Well Productivity Index for Compressible Fluids and Gases},
 %Evolution Equations and Control Theory 5(1),  (2015).
 
\bibitem{zia1}
Tolstov, Y.G.
\newblock \emph{Application of the method of electrical modeling of physical phenomena to solving some problems of underground hydraulics},
Tech Magazine. Physics (Russian), volume XII, issue 10, 1942.

\bibitem{zia2} 
Vakhitov G.G. 
\newblock \emph{Solving problems of underground hydrodynamics by the finite difference method},
 Moscow, Proceedings of VNIIneft, issue 10, Gostoptehizda (Russian) pp. 53-88., 1957.
 
% \bibitem{zia3} 
%Vakhitov G.G. 
%\newblock \emph{Effective ways to solve the problems of developing heterogeneous oil-bearing formations},
%Moscow, Gostoptehizdat (Russian), 1963.
 
% \bibitem{zia4} 
%Zakirov, S.N.   
%\newblock \emph{On modeling gas wells in violation of the linear filtration law},
 %Moscow, Gas business (Russian), No. 6, 1970
 
\bibitem{zia5}
Peaceman, D.W. 
\newblock \emph{Interpretation of Well-Block Pressures in Numerical Reservoir Simulation},
SPEJ, 183-94, June 1978; Trans., AIME, 253. Paper SPE 6893

%\bibitem{zia6}
%van Poolen, H.K., Breitenbach, E.A., Thurnau, D.H 
%\newblock \emph{Treatment of Individual Wells and Grids in Reservoir Modeling},
%SPEJ (Dec. 1968) 341-346. Trans. AIME 243.

\bibitem{zia7}
Peaceman, D.W. 
\newblock \emph{Interpretation of Well-Block Pressures in Numerical Reservoir Simulation with Nonsquare Grid Blocks and Anisotropic Permeability},
SPEJ, June, pp. 531-543, 1983; Trans., AIME, 275. Paper SPE 10528 presented at the 1982 SPE Symposium on Reservoir Simulation, New Orleans, Jan 31-Feb 3.

%\bibitem{zia8}
%Alvestad, J., Holing, K., Christoffersen, K., Langeland, O., Stave, O.  
%\newblock \emph{Interactive modeling of multiphase inflow performance of horizontal and highly deviated wells},
%Paper SPE 27577 presented at the European Petroleum Computer Conference, Aberdeen, UK, 15–17 March 1994.

%\bibitem{zia9}
%Mochizuki, S.  
%\newblock \emph{Well productivity for arbitrarily inclined well},
%Paper SPE 29133 presented at the SPE Reservoir Simulation Symposium, San Antonio, TX, 12–13 February 1995.

%\bibitem{zia10}
%Klausen, R.A., Aavatsmark, I. 
%\newblock \emph{Connection transmissibility factors in reservoir simulation for slanted wells in 3D grids},
%Paper presented at the 7th European Conference on the Mathematics of Oil Recovery, Baveno, Italy, 5–8 September 2000.

%\bibitem{zia11}
%Ding, Y., Renard, G., Weill, L. 
%\newblock \emph{Representation of wells in numerical reservoir simulation},
%SPE Reservoir Evaluation Engrg. (1998) 18–23.

%\bibitem{zia12}
%Su, H.J.
%\newblock \emph{Modelling of off-center wells in reservoir simulation},
%SPE Reservoir Engrg. (1995) 47–51.

%\bibitem{zia13}
%Peaceman, D.W.
%\newblock \emph{Representation of a Horizontal Well in Numerical Reservoir Simulation},
%Paper SPE 21217, 1991. Advanced Technology Series, vol 1, No.1 (1993) 7–16.

%\bibitem{zia14}
%Babu, D.K., Odeh, A.S.
%\newblock \emph{Numerical simulation of horizontal well},
%Paper SPE 20161 presented at the SPE Middle East Oil Show, Bahrain, 16–19 November 1991.

%\bibitem{zia15}
%Babu, D.K., Odeh, A.S.
%\newblock \emph{Productivity of a Horizontal Well},
%SPERE, 417-21, Nov. 1989. Paper SPE 18298 presented at 1988 SPE Annual Technical Conference and Exhibition, Houston, Oct 2-5.

%\bibitem{zia16}
%Palagi, C.L., Aziz, K.
%\newblock \emph{The modeling of vertical and horizontal wells with Voronoi grid},
%Paper SPE 24072 presented at the SPE Western Regional Meeting, Bakersfield, CA, 30 March–1 April 1992.

%\bibitem{zia17}
%Lee, S.H., Milliken, W.J. 
%\newblock \emph{The Productivity Index of an Inclined Well in Finite-Difference Reservoir Simulation},
%Paper SPE 25247 presented at the SPE Symposium on Reservoir Simulation, New Orleans, Feb. 28-Mar. 3, 1993.

%\bibitem{zia18}
%Ding, Y. 
%\newblock \emph{A generalized 3D well model for reservoir simulation},
%SPE Journal, 1 (1996) 437–450. 

%\bibitem{zia19}
%Ding, Y., Jeannin, L. 
%\newblock \emph{A multi-point flux approximation scheme for well modelling in reservoir simulations},
%Paper presented at the 7th European Conference on the Mathematics of Oil Recovery, Baveno, Italy, 5–8 September 2000.

%\bibitem{zia20}
%Morita, N., Singh, S.P., Chen, H.S., Whitfill, D.L. 
%\newblock \emph{Three-dimensional well model pre-processors for reservoir simulation with horizontal and curved inclined wells},
%Paper SPE paper 20718 presented at the SPE Annual Technical Conference and Exhibition, New Orleans, LA, 23–26 September 1990.

%\bibitem{zia21}
%Fung, L.S.-K., Buchanan, W.L., Sharma, R.  
%\newblock \emph{Hybrid-CVFE method for flexible grid reservoir simulation},
%Paper SPE 25266 presented at the SPE Reservoir Simulation Symposium, New Orleans, LA, 28 February–3 March 1993.

%\bibitem{zia22}
%Goode, P.A., Thambynayagam, R.K.M.
%\newblock \emph{Pressure Drawdown and Buildup Analysis of Horizontal Wells in Anisotropic Media},
%SPEFE, Dec. 1987, pp. 683-97.

%\bibitem{zia23}
%Kuchuk, F.J., Goode, P.A., Brice, B.W., Sherrared, D.W., Thambynayagam, R.K.M.. 
%\newblock \emph{Pressure Transient Analysis and Inflow Performance for Horizontal Wells},
%Paper SPE 18300 presented at the SPE Annual Technical Conference and Exhibition, Houston, Oct. 2-5, 1988.

%\bibitem{zia24}
%Clonts, M.D., Ramey, H.J., Jr.
%\newblock \emph{Pressure Transient Analysis for Wells with Horizontal Drainholes},
%Paper SPE 15116 presented at the SPE California Regional Meeting, Oakland, Apr. 2-4, 1986.

%\bibitem{zia25}
%Daviau, F., Mouronval, G., Bourdarot, G., Curutchet, P.
%\newblock \emph{Pressure Analysis for Horizontal Wells},
%Paper SPE 14251 presented at the SPE Annual Technical Conference and Exhibition, Las Vegas, Sept. 22-25, 1985.

%\bibitem{zia26}
%Ozkan, E., Raghavan, R., Joshi, S.D.
%\newblock \emph{Horizontal Well Pressure Analysis},
%SPEFE, 567-75, Dec. 1989.

%\bibitem{zia27}
%Economides, M.J., Brand, C.W., Frick, T.P
%\newblock \emph{Well Configurations in Anisotropic Reservoirs},
%SPEFE 11, pp. 257-262, Dec. 1996.

%\bibitem{zia28}
%Maizeret, P.D. 
%\newblock \emph{Well Indices for Nonconventional Wells},
%Master’s report, Stanford University, 1996.

%\bibitem{zia29}
%Ouyang, L.B., Aziz, K. 
%\newblock \emph{A General Single-Phase Wellbore/Reservoir Coupling Model for Multilateral Wells},
%SPEREE 4(4), pp. 327-335, Aug. 2001.

%\bibitem{zia30}
%Ouyang, L.B. 
%\newblock \emph{Single Phase and Multiphase Fluid Flow in Horizontal Wells},
%PhD thesis, Stanford University, 1998.

%\bibitem{zia31}
%Basquet, R., Alabert, F.G., Caltagirone, J.P., Batsale, J.C.  %\newblock \emph{A Semi-Analytical Approach for Productivity Evaluation of Wells with Complex Geometry in Multilayered Reservoirs},
%Paper SPE 49232 presented at the SPE Annual Technical Conference and Exhibition, New Orleans, Sept. 27-30, 1998.

\bibitem{ibr-isank-sob}
 A.Ibragimov,Z.Sobol,I.Hevage
\newblock\emph{Einstein's model of "the movement of small particles in a stationary liquid" revisited: finite propagation speed}
Turkish Journal of Mathematics ,Vol. 47  ,No. SI-1, 2023.

\bibitem{izia}
A Ibragimov, E Zakirov, I Indrupskiy, D Anikeev 
\newblock \emph{Fundamentals in Peaceman Model for Well-Block radius For Non-Linear Flows Near Well} 
- arXiv preprint arXiv:2203.10140, 2022
Related articles

\bibitem{Tich}
B. M. Budak, A. A. Samarskii and A. N. Tikhonov,
\newblock\emph{ A Collection of Problems on Mathematical Physics.} 
International Series of Monographs on Pure and Applied Mathematics, Volume 52 Oxford/London/Edinburgh/New York/Paris/Frankfurt 1964, Pergamon Press.



\end{thebibliography}

%\akif{This an end of first paper}

%%%%%%%%%%%%%%%%%%%%%%%%%%%%
%%%%%%%%%%%%%%%%%%%%%%%%%%%%%

\end{document}